\newcommand{\R}{{\rm I\!R}}
\newtheorem{theorem}{Theorem}[section]
\newtheorem{lemma}[theorem]{Lemma}
\newtheorem{definition}[theorem]{Definition}
\newtheorem{defi}[theorem]{Definition}
\newtheorem{corollary}[theorem]{Corollary}
\newtheorem{proposition}[theorem]{Proposition}
\theoremstyle{remark}
\newtheorem{remark}[theorem]{Remark}
\newcommand{\mintG}{\displaystyle {\int \kern -0.961em -}_{\Gamma_t}}
\newcommand{\minto}{{\int \kern -0.961em -}_{\Omega}}
\DeclareMathOperator{\esssup}{ess \, \, sup}
\DeclareMathOperator{\essinf}{ess \, \, inf}
\def \e {\varepsilon}
\newcommand{\mintO}{\displaystyle {\int \kern -0.961em -}_{\Omega}}
\newcommand{\mintOs}{\displaystyle {\int \kern -0.961em -}_0^{|\Omega|}}
\newcommand{\tn}[1]{{\color{black}{#1}}}
\newcommand{\tnr}[1]{{\color{black}{#1}}}
\begin{document}

\title{ \tnr{On the $\omega$-limit set of a nonlocal differential equation: application of rearrangement theory}}

\author{Thanh Nam Nguyen\thanks{\texttt{Laboratoire de Math\'{e}matique, Analyse Num\'{e}rique et EDP, Universit\'e de Paris-Sud, F-91405 Orsay Cedex, France}}}
\date{\today}

\maketitle

\medskip

\noindent{\bf Abstract.}
We study the $\omega$-limit set of solutions of a nonlocal ordinary differential equation, where the nonlocal term is such that the space integral of the solution is conserved in time. 
Using the monotone rearrangement theory,
we show that the rearranged equation in one space dimension is the same as the original equation in higher space dimensions. In many cases,
 this property allows us to characterize the $\omega$-limit set for the nonlocal differential equation. More precisely, we prove that 
 the $\omega$-limit set 
only  contains one element.

\bigskip

\section{Introduction}

The aim of the present paper is to study the $\omega$-limit set of solutions of 
the initial value problem
\begin{equation*}\label{chap3:3chapchaporiginal:eq}
(P) \ \ \ \left\{ 
\begin{aligned}
&u_t =  g(u)p(u)-g(u) \,\frac{\displaystyle\int_\Omega g(u)p(u) }{\displaystyle\int_\Omega g(u)}   &    & \quad \quad x\in \Omega,\  t \ge 0,
\\
&u(x,0)=u_0(x) & & \quad \quad x\in \Omega.
\end{aligned}
\right.
\end{equation*}
Here $\Omega \subset \R^N (N \ge 1)$ is an open bounded set, 
$g, p: \R \to \R$  are continuously differentiable
 and $u_0$ is a bounded function. More precise conditions on $g, p$ and 
 $u_0$ will be given later.
A typical example is given by the functions  $g(u)=u(1-u)$ and $p(u)=u$. In this case, the equation becomes
$$u_t =  u^2(1-u)-u(1-u) \,\frac{\displaystyle\int_\Omega u^2(1-u)}{\displaystyle\int_\Omega u(1-u)}.$$
\tnr{The corresponding parabolic equation 
$$u_t =  \Delta u +\frac{1}{\e^2}\left( u^2(1-u)-u(1-u) \,\frac{\displaystyle\int_\Omega u^2(1-u)}{\displaystyle\int_\Omega u(1-u)} \right).$$
has been used by Brassel and Bretin\cite[Formula (9)]{BrasselBretin} to approximate mean curvature flow with volume conservation. It has been also proposed by Nagayama \cite{nagayama} to describe a bubble motion with a chemical reaction.
He supposes furthermore that the volume of the bubble is preserved in time.
Mathematically, it is expressed in the form of the mass conservation property}
\begin{equation}\label{eq:mass:conservation34}
	\int_\Omega u(x,t)\,dx=\int_\Omega u_0(x)\,dx \quad \mbox{for all}\ \ t \ge 0.
\end{equation}
 \tn{We refer to Proposition \ref{lem:local:existence} for a rigorous proof of this equality.}
\medskip 

We will consider Problem $(P)$ under some different hypotheses on the initial function $u_0$. 
Problem $(P)$ possesses a Lyapunov functional whose form depends on the hypothesis satisfied by 
$u_0$ (see section 4 for more details).

\medskip

In this paper, we always consider the following hypotheses on the functions $g$ and $p$:
\begin{equation*}
\left\{ 
\begin{aligned}
&p \in C^1(\R) \ \ \mbox{is strictly increasing on} \ \R,\vspace{8pt} \\ 
&g \in C^1(\R), g(0)=g(1)=0, g>0 \mbox{ on } (0, 1) \mbox{ and } g<0 \mbox{ on }(-\infty,0)\cup(1,\infty).
\end{aligned}
\right.
\end{equation*}
We suppose that the initial function satisfies one of the following hypotheses:

\noindent {$\bf (H_1)$} \,
$u_0 \in L^\infty(\Omega)$, $u_0(x) \ge 1 \mbox{~~for a.e.~~}x\in\Omega,  \mbox{~~and~~} u_0 \not\equiv 1.$
\vskip 0.10cm
\noindent {$\bf (H_2)$} \, 
$u_0 \in L^\infty(\Omega)$, $0 \le  u_0(x) \le 1 \mbox{~~for a.e.~~}x\in\Omega,  \mbox{~~and~~} \int_\Omega g(u_0(x))\,dx \neq 0.$
\vskip 0.10cm
\noindent {$\bf (H_3)$} \, 
$u_0 \in L^\infty(\Omega)$, $u_0(x) \le 0 \mbox{~~for a.e.~~}x\in\Omega,  \mbox{~~and~~} u_0 \not\equiv 0.$

\noindent Note that Hypothesis $\bf (H_1)$ (and also $\bf (H_3)$) implies that $\int_\Omega g(u_0) \neq 0$.

\medskip

Before defining a solution of Problem $(P)$, we introduce the notation 
\begin{equation}\label{eq:defi:F}
F (u):=g(u)p(u)-g(u)\dfrac{\displaystyle \int_\Omega g(u)p(u)}{\displaystyle \int_\Omega g(u)}.
\end{equation}
\begin{definition}\label{thedefinitionofsolution}
Let $0<T \le \infty$. The function $u\in C^1([0,T);L^\infty(\Omega))$ is called a solution of Problem $(P)$ on $[0,T)$ if  the three following properties hold
\begin{enumerate}[label=\emph{(\roman*)}]
\item $u(0)=u_0$,
\item $\displaystyle \int_\Omega g(u(t)) \neq 0 \mbox{~~for all~~} t\in[0, T)$,
\item
$\dfrac{du}{dt}=F(u) \mbox{~~\tnr{in the whole interval}~~}[0, T)$.
\end{enumerate}
\end{definition}

\medskip

\tn{The $\omega$-limit sets}
are
important and interesting 
objects in the theory of dynamical systems. 
Understanding their structure allows us to apprehend
the long time behavior of solutions of dynamical systems. 
In this paper, we characterize  the $\omega$-limit set of solutions of 
Problem $(P)$, which is defined as follows:
 \begin{defi}\label{defi:omega:limit}
We define the $\omega$-limit set of $u_0$ by
$$\omega(u_0):=\{\varphi \in L^1(\Omega): \exists t_n \to \infty, u(t_n) \to \varphi \mbox{~~in~~} L^1(\Omega)\mbox{~as~} n \to \infty\}.$$
\end{defi}

In the above definition, we do not use the $L^\infty$-topology
to define $\omega(u_0)$ because 
the solution often develops sharp transition layers which cannot
be captured by the $L^\infty$-topology.
\tn{Note also that} as we will see in Theorem \ref{thm:existence:boundedness}, solutions of $(P)$ are uniformly bounded so that the topology of $L^1$
 is equivalent to that of $L^p$ with $p \in [1, \infty)$. 
For convenience, we refer to the books \cite{robinson, temam} 
for studies about dynamical  systems as well as 
the structure of $\omega$-limit sets.

\medskip

An essential step to study $\omega(u_0)$ is to show 
the relative compactness of the solution orbits in $L^1(\Omega)$. 
In local problems, 
the standard comparison principle can be \tn{applied} to obtain 
the uniform boundedness of solutions. 
Furthermore, in local problems with a diffusion term, such as local parabolic problems, 
the uniform boundedness of solutions
 implies the relative compactness of solution orbits in some suitable spaces by using Sobolev imbedding theorems.
However, the above scheme cannot be applied to Problem $(P)$, due to the presence of the nonlocal term as well as to the lack of a diffusion term.

\medskip

By careful observation of the dynamics of pathwise trajectories 
(i.e. the sets $ \{ u(x,t): t \ge 0 \} $ for $x \in \Omega$), we show the existence of invariant sets and
 hence the uniform boundedness of solutions. The difficulties connected with the lack of diffusion term will be overcome by using ideas presented in \cite{HIlhorst-Matano-Nguyen-Weber}.
More precisely,
applying the rearrangement theory, 
we introduce the equi-measurable rearrangement $u^\sharp$ and show that it is the solution of a one-dimensional problem
$(P^\sharp)$ (see section 3). Since the orbit $\{u^\sharp(t) : t \ge 0\}$ is bounded in $BV(\Omega^\sharp)$, 
where $\Omega^\sharp:=(0, |\Omega|) \subset \R$, it is relatively compact in
$L^1(\Omega^\sharp)$. We then deduce the relative compactness of solution orbits of Problem $(P)$, by using the fact that
\begin{equation}\label{eq:24}
\|u(t)-u(\tau)\|_{L^1(\Omega)}=\|u^\sharp(t)-u^\sharp(\tau)\|_{L^1(\Omega^\sharp)}.
\end{equation}
Note that the inequality $\|u(t)-u(\tau)\|_{L^1(\Omega)} \ge \|u^\sharp(t)-u^\sharp(\tau)\|_{L^1(\Omega^\sharp)}$ 
follows from a general property of the \tn{rearrangement theory}. The important point is that \eqref{eq:24} involves an equality.

\medskip

An other advantage of considering Problem $(P^\sharp)$ is that the differential equations in $(P^\sharp)$ and $(P)$ have the same form. Therefore we will study the $\omega$-limit set for Problem $(P^\sharp)$ rather than for Problem $(P)$.
Although $(P^\sharp)$ possesses many stationary solutions, 
the one-dimensional structure of Problem $(P^\sharp)$ allows us to characterize its $\omega$-limit set, and then deduce results for that of $(P)$.

\medskip

{The organization of this article is as follows:} In section 2, we prove the global existence and uniqueness of the solution 
as well as its uniform boundedness. Next in section 3, 
we recall and apply results from the arrangement theory presented in \cite{HIlhorst-Matano-Nguyen-Weber} to obtain
the \tn{relative compactness} of the solution in $L^1(\Omega)$.
In section 4, we prove that Problem $(P)$ possesses Lyapunov functionals and use them 
together with the relative compactness of the solution to show that $\omega(u_0)$ is nonempty and consists of stationary solutions. Moreover, these stationary solutions are step functions. More precise properties of these functions are given in Theorems \ref{chap3:3chapchapomega:limit}  and \ref{propo:omega:2:rangbuoc}.
In section 5, we suppose that one of the hypotheses $\bf (H_1)$ or $\bf (H_3)$ holds and prove that $\omega(u_0)$ 
only contains one element.  

\tnr{In the case that Hypothesis $\bf (H_2)$ is satisfied, the structure of the $\omega$-limit set becomes more complicated than in the other cases since the solution can develop many transition layers. More precisely, as we will see in Theorem \ref{chap3:3chapchapomega:limit}, elements in the $\omega$-limit set may contain step functions taking three values $\{0, 1, \nu\}$ instead of the two values $\{1, \mu\}$ in the case $\bf (H_1)$ and $\{0, \xi\}$ in the case $\bf (H_3)$. As a consequence, it is more difficult to prove that the $\omega$-limit set contains a single element. We refer to our forthcoming paper \cite{HIlhorst-Laurencot-Nguyen} for a study in more details of the case $\bf(H_2)$ .}


\section{Existence and uniqueness of solutions of $(P)$}\label{chap3:sec2}
\subsection{Local existence}

First we prove the local Lipschitz property of the nonlocal nonlinear term $F$, given by \eqref{eq:defi:F},
  in the space $L^\infty(\Omega)$.
\begin{lemma}[Local Lipschitz continuity of $F$]\label{chap3:3chapchaptinhchat:lipchitz:23:08}
Let $v \in L^\infty(\Omega)$ be such that
 $\int_\Omega g(v(x))\,dx \neq 0$. Then there exist a $L^\infty(\Omega)$-neigbourhood  $\mathcal V$ of $v$ and  a constant $L>0$ such that 
\tn{$F(\widetilde v)$ is well-defined for all $\widetilde v \in \mathcal V$
 and that}
$$\|F(v_1)-F(v_2)\|_{L^\infty(\Omega)} \le L \|v_1-v_2\|_{L^\infty(\Omega)},$$
for all $v_1, v_2 \in \mathcal V$.
\end{lemma}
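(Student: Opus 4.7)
The plan is to exploit the fact that $g,p\in C^1(\R)$ are locally Lipschitz and that $\int_\Omega g(v)\neq 0$ is preserved, quantitatively, under small $L^\infty$-perturbations. All the estimates will be elementary; the only real content is to organize the difference $F(v_1)-F(v_2)$ so that the denominator $\int_\Omega g(\,\cdot\,)$ never becomes dangerous.

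First I would fix the neighbourhood. Set $M:=\|v\|_{L^\infty(\Omega)}+1$. Since $g,p\in C^1(\R)$, there exist constants $K_g,K_p,L_g,L_p$ depending on $M$ such that $|g|,|p|\le K_g,K_p$ and $g,p$ are $L_g,L_p$-Lipschitz on $[-M,M]$. Because
$$\Bigl|\int_\Omega g(\widetilde v)-\int_\Omega g(v)\Bigr|\le L_g\,|\Omega|\,\|\widetilde v-v\|_{L^\infty(\Omega)},$$
one can choose $\delta\in(0,1)$ so small that, for every $\widetilde v$ with $\|\widetilde v-v\|_{L^\infty(\Omega)}<\delta$, one has $\|\widetilde v\|_{L^\infty(\Omega)}\le M$ and
$$\Bigl|\int_\Omega g(\widetilde v)\Bigr|\ge \tfrac12\Bigl|\int_\Omega g(v)\Bigr|=:m>0.$$
Define $\mathcal V:=\{\widetilde v\in L^\infty(\Omega):\|\widetilde v-v\|_{L^\infty(\Omega)}<\delta\}$. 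On $\mathcal V$ the denominator stays $\ge m$ in absolute value, so $F(\widetilde v)$ is well-defined.

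Next I would estimate $F(v_1)-F(v_2)$ for $v_1,v_2\in\mathcal V$ by the standard add-and-subtract splitting
\begin{align*}
F(v_1)-F(v_2) &= \bigl[g(v_1)p(v_1)-g(v_2)p(v_2)\bigr]\\
&\quad -\bigl[g(v_1)-g(v_2)\bigr]\frac{\int_\Omega g(v_1)p(v_1)}{\int_\Omega g(v_1)}\\
&\quad -g(v_2)\left[\frac{\int_\Omega g(v_1)p(v_1)}{\int_\Omega g(v_1)}-\frac{\int_\Omega g(v_2)p(v_2)}{\int_\Omega g(v_2)}\right].
\end{align*}
The first bracket is Lipschitz in the $L^\infty$-norm with constant $K_g L_p+K_p L_g$ because $gp$ is $C^1$ on $[-M,M]$. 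The second bracket contributes the factor $L_g\|v_1-v_2\|_{L^\infty(\Omega)}$ times a constant bounded by $K_gK_p|\Omega|/m$. For the third term I would further expand
$$\frac{A_1}{B_1}-\frac{A_2}{B_2}=\frac{A_1-A_2}{B_1}+\frac{A_2(B_2-B_1)}{B_1B_2},$$
with $A_i=\int_\Omega g(v_i)p(v_i)$ and $B_i=\int_\Omega g(v_i)$, and bound the numerators by $|\Omega|(K_gL_p+K_pL_g)\|v_1-v_2\|_{L^\infty(\Omega)}$ and $|\Omega|L_g\|v_1-v_2\|_{L^\infty(\Omega)}$ respectively, using $|B_i|\ge m$ in the denominators and $|A_2|\le K_gK_p|\Omega|$, $|g(v_2)|\le K_g$. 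Collecting all constants yields the required Lipschitz bound with some $L=L(v,g,p,|\Omega|)>0$.

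No step is truly hard; the one place where one must be slightly careful is the third term, where the naive estimate $|A_1/B_1-A_2/B_2|$ could blow up if the denominators were not kept uniformly away from zero on $\mathcal V$. That is precisely what the choice of $\delta$ above secures, and it is really the only structural ingredient beyond the $C^1$-regularity of $g$ and $p$.
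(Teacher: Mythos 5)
Your proof is correct and follows essentially the same route as the paper: restrict to a small $L^\infty$-ball on which $\bigl|\int_\Omega g(\widetilde v)\bigr|$ stays bounded away from zero, then telescope the difference $F(v_1)-F(v_2)$ using the local Lipschitz bounds of $g$ and $gp$ on a compact interval. The only difference is bookkeeping (the paper puts the two quotient terms over the common denominator $\int_\Omega g(v_1)\int_\Omega g(v_2)$ and telescopes the resulting numerator into three pieces, while you telescope the product and the quotient separately), which changes nothing of substance.
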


\begin{proof}
Since $g$ is continuous, the map $v \mapsto \int_\Omega g(v)$ is continuous from $L^\infty(\Omega)$ to $L^\infty(\Omega)$. It follows that 
there exist a constant $\alpha>0$ and a neighbourhood $\mathcal V$ of $v$ such that 
\begin{equation}\label{chap3:3chapchapgia:thief:cho:ngay:27:08}
 \left | \int_\Omega g(\widetilde v) \right | \ge \alpha \mbox{~~for all~~} \widetilde v \in \mathcal V.
\end{equation}
Without loss of generality, we may choose 
$$\mathcal V:= \{\widetilde v   \in L^\infty(\Omega):\| \widetilde v-v \|_{L^\infty(\Omega)} \le \e\},$$
for a constant $\e>0$ small enough. We set 
$$\bar c:= \|v\|_{L^\infty(\Omega)}+\e, \quad f(s):=g(s)p(s),$$
and
\begin{align*}
&K:= \max\big\{\sup_{[-\bar c, \bar c]} |f(s)|, \sup_{[-\bar c, \bar c]} |g(s)|, \sup_{[-\bar c, \bar c]} |f'(s)|, \sup_{[-\bar c, \bar c]} |g'(s)|\big\}.
\end{align*}
Then the following properties \tnr{hold} and will be used later:  \tnr{for} all $v_1, v_2 \in \mathcal V$,
\begin{equation}\label{chap3:3chapchapbdt:for:f:lop:24:08}
\|f(v_1)-f(v_2)\|_{L^\infty(\Omega)} \le K \|v_1-v_2\|_{L^\infty(\Omega)},
\end{equation}
and
\begin{equation*}\label{chap3:3chapchapbdt:for:h:lop:24:08}
\|g(v_1)-g( v_2)\|_{L^\infty(\Omega)} \le K \|v_1-v_2\|_{L^\infty(\Omega)}.
\end{equation*}

We have
\begin{align*}
F(v_1)-F(v_2)&=[ f(v_1)- f(v_2)] - \left[ g(v_1) \dfrac{\displaystyle \int_\Omega  f(v_1)}{\displaystyle \int_\Omega g(v_1)}-  
		 g(v_2) \dfrac{\displaystyle \int_\Omega  f(v_2)}{\displaystyle \int_\Omega g(v_2)} \right]\\								
		 &=[ f(v_1)- f(v_2)] - \dfrac{ g(v_1) \displaystyle \int_\Omega  f(v_1) \displaystyle \int_\Omega g(v_2) - g(v_2)  \displaystyle \int_\Omega  f(v_2)  \int_\Omega g(v_1)}
		{ \displaystyle \int_\Omega g(v_1) \int_\Omega g(v_2)\displaystyle}	\\
										&=:A_1-\frac{A_2}{A_3},
\end{align*}
where 
$$A_1:= f(v_1)- f(v_2),$$
$$A_2:= g(v_1) \displaystyle \int_\Omega  f(v_1) \displaystyle \int_\Omega g(v_2) -  g(v_2) \int_\Omega  f(v_2) \int_\Omega g(v_1),$$
and 
$$A_3:= \int_\Omega g(v_1) \int_\Omega g(v_2).$$
In the sequel, we estimate $A_1$, $A_2$ and $A_3$. 
First the inequality \eqref{chap3:3chapchapbdt:for:f:lop:24:08} yields
\begin{equation}\label{chap3:3chapchapba:dang:thuc:so:1:cho:chung:minh:G:v}
\|A_1\|_{L^\infty(\Omega)}  \le K \|v_1-v_2\|_{L^\infty(\Omega)}.
\end{equation}
Next we write $A_2$ as
\begin{multline*}
A_2= g(v_1) \displaystyle \int_\Omega  f(v_1) \displaystyle \int_\Omega g(v_2) - g(v_2)\displaystyle \int_\Omega  f(v_1) \displaystyle \int_\Omega g(v_2)\\
+ g(v_2)\displaystyle \int_\Omega  f(v_1) \displaystyle \int_\Omega g(v_2) - g(v_2)\displaystyle \int_\Omega  f(v_2) \displaystyle \int_\Omega g(v_2)\\
+ g(v_2)\displaystyle \int_\Omega  f(v_2) \displaystyle \int_\Omega g(v_2)
 -  g(v_2) \displaystyle \int_\Omega  f(v_2) \int_\Omega g(v_1),
\end{multline*}
or equivalently,
\begin{multline*}
A_2=[ g(v_1) -  g(v_2)] \displaystyle \int_\Omega  f(v_1) \displaystyle \int_\Omega g(v_2)\\
+ g(v_2)\displaystyle \int_\Omega [ f(v_1)- f(v_2)] \displaystyle \int_\Omega g(v_2) \\
+ g(v_2)\displaystyle \int_\Omega  f(v_2) \displaystyle \int_\Omega [g(v_2)- g(v_1)],
\end{multline*}
which in turn implies that
\begin{equation}\label{chap3:3chapchapbdt:cho:A:2:chung:minh:lip:G:24:08}
\|A_2\|_{L^\infty(\Omega)} \le    3 K^3|\Omega|^2 \|v_1-v_2\|_{L^\infty(\Omega)}. 
\end{equation}
As for the term $A_3$, we apply \eqref{chap3:3chapchapgia:thief:cho:ngay:27:08} to obtain
\begin{equation}\label{chap3:3chapchapbdt:cho:A:3:chung:minh:lip:G:24:08}
|A_3|  \ge \alpha^2>0.
\end{equation}
Combining \eqref{chap3:3chapchapba:dang:thuc:so:1:cho:chung:minh:G:v}, \eqref{chap3:3chapchapbdt:cho:A:2:chung:minh:lip:G:24:08} and \eqref{chap3:3chapchapbdt:cho:A:3:chung:minh:lip:G:24:08}, we deduce that
$$\|F(v_1)-F(v_2)\|_{L^\infty(\Omega)} \le \Big(K+\frac{3 K^3|\Omega|^2}{\alpha^2}\Big) \,\|v_1-v_2\|_{L^\infty(\Omega)}.$$
This completes the proof of Lemma \ref{chap3:3chapchaptinhchat:lipchitz:23:08}.
\end{proof}

\begin{proposition}\label{lem:local:existence}
Let $u_0 \in L^\infty(\Omega)$ satisfy $\int_\Omega g(u_0) \neq 0$. Then Problem $(P)$ has a unique local-in-time solution. Moreover, we have
\begin{equation}\label{eq:mass-conservation}
\int_\Omega u(x,t)\,dx=\int_\Omega u_0(x)\,dx \mbox{~~for all~~} t \in [0, T_{max}(u_0)),
\end{equation}
where $T_{max}(u_0)$ denotes the maximal time interval of the existence of solution. 
\end{proposition}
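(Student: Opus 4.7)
The plan is to reduce Proposition \ref{lem:local:existence} to a classical Cauchy--Lipschitz (Picard--Lindelöf) argument for ODEs in the Banach space $L^\infty(\Omega)$. Lemma \ref{chap3:3chapchaptinhchat:lipchitz:23:08} furnishes precisely the ingredient needed: since $\int_\Omega g(u_0) \neq 0$, there is an $L^\infty$-neighbourhood $\mathcal{V}$ of $u_0$ on which $F$ is well-defined and Lipschitz continuous with some constant $L>0$. The standard fixed-point argument applied to the integral formulation
\[
u(t) = u_0 + \int_0^t F(u(s))\,ds
\]
on the closed ball $\{ u \in C([0,T]; L^\infty(\Omega)) : \|u(t)-u_0\|_{L^\infty} \le \e \text{ for all } t\}$ then yields, for $T>0$ sufficiently small, a unique fixed point. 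Because $F(u) \in L^\infty(\Omega)$ depends continuously on $u \in \mathcal{V}$, differentiating the integral formulation shows that this fixed point actually lies in $C^1([0,T]; L^\infty(\Omega))$ and satisfies $du/dt = F(u)$ pointwise in $t$.

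Next I would verify condition (ii) of Definition \ref{thedefinitionofsolution}. By construction the solution stays inside $\mathcal{V}$, so inequality \eqref{chap3:3chapchapgia:thief:cho:ngay:27:08} gives $|\int_\Omega g(u(t))| \ge \alpha > 0$ on $[0,T]$. This allows one to concatenate solutions in the standard way, yielding a maximal existence time $T_{\max}(u_0) \in (0,\infty]$; uniqueness on any subinterval follows from Grönwall's inequality applied to two solutions in the same neighbourhood.

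For the mass conservation identity \eqref{eq:mass-conservation}, I would simply integrate the differential equation in $x$ over $\Omega$. Since $u \in C^1([0,T_{\max}); L^\infty(\Omega))$, the linear functional $v \mapsto \int_\Omega v$ is continuous on $L^\infty(\Omega)$, so one may commute $d/dt$ with $\int_\Omega$ and obtain
\[
\frac{d}{dt}\int_\Omega u(x,t)\,dx = \int_\Omega g(u)p(u) - \frac{\int_\Omega g(u)p(u)}{\int_\Omega g(u)}\int_\Omega g(u) = 0,
\]
which gives \eqref{eq:mass-conservation} by integration in $t$ (this is also why the nonlocal term was designed in this form).

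The only genuinely delicate point is checking the Lipschitz/continuity hypotheses needed to apply the abstract Picard--Lindelöf theorem on a Banach space: the constant $L$ in Lemma \ref{chap3:3chapchaptinhchat:lipchitz:23:08} depends on the neighbourhood $\mathcal{V}$, so one must choose the radius $\e$ of the ball used in the contraction argument and the existence time $T$ consistently, namely so that $LT < 1$ and the candidate solutions remain in $\mathcal{V}$. All remaining steps are routine, and the proof is essentially mechanical once Lemma \ref{chap3:3chapchaptinhchat:lipchitz:23:08} is in hand.
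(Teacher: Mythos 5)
Your proposal is correct and follows essentially the same route as the paper: the paper likewise invokes the local Lipschitz continuity of $F$ from Lemma \ref{chap3:3chapchaptinhchat:lipchitz:23:08} to get local existence and uniqueness from the standard ODE theory in $L^\infty(\Omega)$, and derives \eqref{eq:mass-conservation} from the identity $\int_\Omega F(u)\,dx=0$ (the paper integrates the equation in $t$ first and then over $\Omega$, while you differentiate $\int_\Omega u$ in $t$, but this is the same computation). No gaps.
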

\begin{proof}
Since $F$ is locally Lipschitz continuous in $L^\infty(\Omega)$, the local existence follows from the standard theory of ordinary differential equations. We now prove \eqref{eq:mass-conservation}.
Integrating the differential equation in Problem $(P)$ from $0$ to $t$, we obtain
$$u(t)-u_0=\int_0^t u_t{(s)\,ds} =\int_0^t F(u{(s))\,ds}.$$
It follows that
 $$\int_\Omega  u(x,t)\,dx-\int_\Omega  u_0(x)\,dx=\int_0^t \int_{\Omega} F(u) {\,dxds}=0,$$
 where the last identity holds since 
 $$\int_{\Omega} F(u) {\,dx}=0.$$
 This completes the proof of the \tnr{proposition}.
\end{proof}

\begin{lemma}\label{lem:blow:up:time}
If $T_{max}(u_0) <\infty$ and $\limsup_{t \uparrow T_{max}(u_0)} \|F(u(t))\|_{L^\infty(\Omega)}<\infty$, then 
$u(T_{max}(u_0)-):=\lim_{t \uparrow T_{max}(u_0)} u(t)$ exists  in $L^\infty(\Omega)$ and  
$$\int_\Omega g(u(T_{max}(u_0)-))=0.$$
\end{lemma}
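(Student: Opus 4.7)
The plan is to handle the two assertions separately: convergence of $u(t)$ as $t \uparrow T_{\max}(u_0)$ comes from a direct Cauchy estimate, while the integral identity $\int_\Omega g(u(T_{\max}(u_0)-))=0$ will be obtained by contradiction from Proposition~\ref{lem:local:existence} together with the local Lipschitz property in Lemma~\ref{chap3:3chapchaptinhchat:lipchitz:23:08}.

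For the first assertion, the assumption $\limsup_{t \uparrow T_{\max}(u_0)} \|F(u(t))\|_{L^\infty(\Omega)} < \infty$ yields constants $M>0$ and $\delta \in (0, T_{\max}(u_0))$ with $\|F(u(t))\|_{L^\infty(\Omega)} \le M$ for all $t \in [T_{\max}(u_0) - \delta, T_{\max}(u_0))$. Because $u \in C^1([0, T_{\max}(u_0)); L^\infty(\Omega))$ satisfies $u_t = F(u)$, any $s,t$ in this interval obey
\[
\|u(t) - u(s)\|_{L^\infty(\Omega)} = \Bigl\|\int_s^t F(u(\sigma))\,d\sigma\Bigr\|_{L^\infty(\Omega)} \le M|t-s|.
\]
Hence $u$ satisfies the Cauchy criterion in the Banach space $L^\infty(\Omega)$ as $t\uparrow T_{\max}(u_0)$, and the limit $u(T_{\max}(u_0)-)$ exists in $L^\infty(\Omega)$.

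For the second assertion, I would argue by contradiction and suppose $\int_\Omega g(u(T_{\max}(u_0)-)) \neq 0$. Applying Proposition~\ref{lem:local:existence} with initial datum $u(T_{\max}(u_0)-)$, I obtain $\tau > 0$ and a solution $w \in C^1([0,\tau); L^\infty(\Omega))$ of $(P)$ with $w(0) = u(T_{\max}(u_0)-)$. I would then define $\widetilde u$ by concatenation, equal to $u(t)$ on $[0, T_{\max}(u_0))$ and to $w(t - T_{\max}(u_0))$ on $[T_{\max}(u_0), T_{\max}(u_0) + \tau)$. Continuity at $t = T_{\max}(u_0)$ follows from the first part. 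Since $F$ is continuous on an $L^\infty$-neighbourhood of $u(T_{\max}(u_0)-)$ by Lemma~\ref{chap3:3chapchaptinhchat:lipchitz:23:08}, both one-sided derivatives at the junction equal $F(u(T_{\max}(u_0)-))$, so $\widetilde u \in C^1([0, T_{\max}(u_0) + \tau); L^\infty(\Omega))$ solves the differential equation. Shrinking $\tau$ if necessary preserves $\int_\Omega g(\widetilde u(t)) \neq 0$ throughout the extended interval, by continuity of $v \mapsto \int_\Omega g(v)$ on $L^\infty(\Omega)$. Thus $\widetilde u$ is a solution of $(P)$ on $[0, T_{\max}(u_0) + \tau)$, contradicting the maximality of $T_{\max}(u_0)$.

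The main obstacle is the gluing step: one has to check simultaneously the $C^1$-regularity of $\widetilde u$ at $T_{\max}(u_0)$ and the validity of condition (ii) of Definition~\ref{thedefinitionofsolution} on the whole extended interval. Both reduce to continuity statements near the point $u(T_{\max}(u_0)-)$, which are precisely what Lemma~\ref{chap3:3chapchaptinhchat:lipchitz:23:08} supplies. Once this matching is justified, the contradiction with the maximality of $T_{\max}(u_0)$ closes the argument.
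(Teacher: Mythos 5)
Your argument is correct and follows essentially the same route as the paper: a Cauchy estimate from the uniform bound on $\|F(u(t))\|_{L^\infty(\Omega)}$ near $T_{\max}(u_0)$ gives the existence of the limit, and the integral identity is obtained by contradiction via the local existence result applied to the initial datum $u(T_{\max}(u_0)-)$. The only difference is that you spell out the concatenation and $C^1$-matching step that the paper leaves implicit, which is a welcome clarification rather than a deviation.
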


\begin{proof}
For simplicity we write $T_{max}$ instead of $T_{max}(u_0)$. Set
$$M:=\limsup_{t \uparrow T_{max}} \|F(u(t))\|_{L^\infty(\Omega)} < \infty.$$ 
Then there exists $0<T<T_{max}$ such that
$$ \| F(u(t))\|_{L^\infty(\Omega)} \le 2M \mbox{~~for all~~}t \in [T, T_{max}).$$
Consequently, for any $\tn{t , t'} \in [T, T_{max})$, with $t <t'$,
we have
$$\|u(t) -u(t')\|_{L^\infty(\Omega)} \le \int_t^{t'}\|\tnr{F(u(s))\|_{L^\infty(\Omega)} \,ds} \le \tn{ 2M} |t-t'|.$$
Thus $\{u(t)\}$ is a Cauchy sequence so
that the limit $u(T_{max}-):=\lim_{t \uparrow T_{max}} u(t)$ exists  in $L^\infty(\Omega)$. If $\int_\Omega g(u(T_{max}-)) \neq 0$, then, by Lemma \ref{lem:local:existence}, we can extend the solution on $[T_{max}, T_{max}+\delta)$, with some $\delta >0$, which contradicts the definition to $T_{max}$. This completes the proof of the lemma.
\end{proof}


\subsection{Global solution}

In this subsection, we fix $u_0 \in L^\infty(\Omega)$ satisfying $\int_\Omega g(u_0) \neq 0$ and denote by $[0, T_{max})$ the maximal time interval of the existence of solution.
Set
\begin{equation}\label{definition:lamda:t}
\lambda(t)=\frac{\displaystyle\int_\Omega g(u)p(u)}{\displaystyle\int_\Omega g(u)} \mbox{~~for all~~} t \in [0, T_{max}),
\end{equation}
and study solutions $Y(t;s)$ of the following auxiliary problem:
\begin{equation}
(ODE) \,\,
\begin{cases}
\dot{Y}= g(Y)p(Y)-g(Y)\lambda(t), \quad t > 0,\vspace{6pt}\\
Y(0)=s,
\end{cases}
\end{equation}
where $\dot{Y}:=dY/dt$. We remark that the function $u$ satisfies
\begin{equation}\label{problem:ODE:22:6:3:14}
u(x,t)=Y(t;u_0(x)) \mbox{~~for a.e.~~} x \in \Omega \mbox{~~and all~~}t \in [0, T_{max}).
\end{equation}

\begin{lemma}\label{lem:monotonicity:of:Y}
Let $\widetilde s <s$ and let $0<T<T_{max}$. Assume that Problem \tn{$(ODE)$}
possesses the
solutions $Y(t;\widetilde s), Y(t;s) \in C^1([0, T])$, respectively.
Then
\begin{equation}\label{ine:monotone:Y}
Y(t;\widetilde s)<Y(t;s) \mbox{~~for all~~} t  \in [0, T].
\end{equation}
\end{lemma}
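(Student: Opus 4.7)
The statement is the standard non-crossing property for trajectories of a scalar ODE whose right-hand side is Lipschitz in the spatial variable. The plan is to argue by contradiction, using the fact that distinct solutions of a Cauchy-Lipschitz ODE cannot meet.

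First, I would fix the setting. Denote by $h(t,Y):=g(Y)p(Y)-g(Y)\lambda(t)$ the right-hand side of $(ODE)$. Since $g,p\in C^1(\R)$ and $Y(\cdot;\widetilde s), Y(\cdot;s)\in C^1([0,T])$ are continuous, both trajectories take values in a compact interval $[-R,R]\subset\R$ with
$$R:=\max\bigl\{\|Y(\cdot;\widetilde s)\|_{L^\infty(0,T)},\|Y(\cdot;s)\|_{L^\infty(0,T)}\bigr\}.$$
On this interval, $\partial_Y h(t,\cdot)=(gp)'-g'\lambda$ is bounded uniformly in $t\in[0,T]$, because $\lambda$ is continuous on $[0,T]$ (by continuity of $u(\cdot)$ in $L^\infty(\Omega)$ together with $\int_\Omega g(u)\neq 0$ on $[0,T]\subset[0,T_{\max})$). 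Hence there is a constant $L>0$ such that
$$|h(t,y_1)-h(t,y_2)|\le L\,|y_1-y_2|\qquad\text{for all }t\in[0,T],\ y_1,y_2\in[-R,R].$$

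Next, set $Z(t):=Y(t;s)-Y(t;\widetilde s)$, so $Z\in C^1([0,T])$ and $Z(0)=s-\widetilde s>0$. Assume for contradiction that \eqref{ine:monotone:Y} fails; by continuity of $Z$, there is a smallest $t_0\in(0,T]$ with $Z(t_0)=0$, and $Z(t)>0$ for $t\in[0,t_0)$. From the ODE,
$$\dot Z(t)=h(t,Y(t;s))-h(t,Y(t;\widetilde s)),\qquad t\in[0,t_0],$$
whence $|\dot Z(t)|\le L\,|Z(t)|$ on $[0,t_0]$. Gronwall's inequality applied to $|Z|$ (or, equivalently, comparison with the ODE $\dot w=-Lw$ from below) yields
$$Z(t)\ge Z(0)\,e^{-Lt}\ge(s-\widetilde s)\,e^{-LT}>0\qquad\text{for all }t\in[0,t_0],$$
which contradicts $Z(t_0)=0$. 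Therefore $Z(t)>0$ on $[0,T]$, proving \eqref{ine:monotone:Y}.

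There is no real obstacle: this is a textbook consequence of uniqueness for scalar ODEs with a Lipschitz right-hand side. The only mild care needed is to verify the Lipschitz bound on the relevant time interval, which rests on the compactness of the ranges of the two trajectories together with the continuity of $\lambda$ on $[0,T]$, itself a direct consequence of $u\in C^1([0,T_{\max});L^\infty(\Omega))$ and the definition \eqref{definition:lamda:t}.
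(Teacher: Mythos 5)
Your proof is correct and follows essentially the same route as the paper: the paper simply invokes backward uniqueness for $(ODE)$ (trajectories of a scalar ODE with locally Lipschitz right-hand side cannot cross), while you supply the underlying details via a first-crossing-time argument and Gronwall's inequality, after verifying the Lipschitz bound using the boundedness of $\lambda$ on $[0,T]$. Your version is a self-contained expansion of the one-line argument the paper gives, with no gaps.
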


\begin{proof}
Since $Y(0;\widetilde s)=\widetilde s<s=Y(0;s)$, the assertion follows immediately from the backward uniqueness of solution of $(ODE)$.
\end{proof}

\begin{theorem}\label{thm:existence:boundedness}
Assume that one of the hypotheses $\bf (H_1), (H_2), (H_3)$ holds.
Then Problem $(P)$ possesses a global solution $u \in C^1([0, \infty); L^\infty(\Omega))$.
Moreover:
\begin{enumerate}[label=\emph{(\roman*)}]
\item If  $\bf (H_1)$ holds, then for all $t \ge 0$,
\begin{equation}\label{ine:h1}
1 \le u(x,t) \le \esssup_{\Omega}u_0 \mbox{~~for a.e.~~}x\in \Omega.
\end{equation}
\item If $\bf (H_2)$ holds, then for all $t \ge 0$,
\begin{equation}\label{ine:h2}
0 \le u(x, t) \le 1\mbox{~~for a.e.~~}\tnr{x\in \Omega}.
\end{equation}
\item If  $\bf (H_3)$ holds, then for all $t \ge 0$,
$$\essinf_{\Omega} u_0 \le u(x,t) \le 0 \mbox{~~for a.e.~~}x\in \Omega.$$
\end{enumerate}
\end{theorem}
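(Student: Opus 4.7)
The plan is to combine the ODE representation \eqref{problem:ODE:22:6:3:14}, the strict monotonicity of Lemma \ref{lem:monotonicity:of:Y}, and the blow-up criterion of Lemma \ref{lem:blow:up:time}. The strategy has two stages: first, derive the pointwise bounds \eqref{ine:h1}--\eqref{ine:h2} on the maximal interval of existence $[0, T_{max})$; then, use these bounds together with Lemma \ref{lem:blow:up:time} to rule out $T_{max} < \infty$.

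For the pointwise bounds, I would first observe that because $g(0) = g(1) = 0$, the constants $Y \equiv 0$ and $Y \equiv 1$ solve $(ODE)$. Applying Lemma \ref{lem:monotonicity:of:Y} to the representation $u(x, t) = Y(t; u_0(x))$ immediately yields $u(t) \ge 1$ a.e.\ in case $\bf (H_1)$, $0 \le u(t) \le 1$ a.e.\ in case $\bf (H_2)$, and $u(t) \le 0$ a.e.\ in case $\bf (H_3)$. Only the outer bound $u \le \esssup_\Omega u_0$ in $\bf (H_1)$ and its symmetric counterpart in $\bf (H_3)$ need further work.

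To prove the upper bound in $\bf (H_1)$, set $M_0 := \esssup_\Omega u_0$. Strict monotonicity and continuity of $Y(t; \cdot)$ combined with the definition of essential supremum give $M(t) := \esssup_\Omega u(\cdot, t) = Y(t; M_0)$, so that $M$ satisfies the scalar ODE $\dot M = g(M)[p(M) - \lambda(t)]$. The key observation is that since $u \ge 1$, one has $g(u) \le 0$, and rewriting $\lambda(t)$ as $\int_\Omega(-g(u))p(u)\,dx$ divided by $\int_\Omega(-g(u))\,dx$ exhibits $\lambda(t)$ as a weighted average of $p(u)$ with nonnegative weights. Since $p$ is increasing and $u \le M(t)$, this gives $\lambda(t) \le p(M(t))$, hence $\dot M(t) \le 0$; integration yields $M(t) \le M_0$. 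The lower bound in $\bf (H_3)$ follows by the symmetric argument applied to $m(t) := \essinf_\Omega u(\cdot, t)$.

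For global existence, the weighted-average representation above shows $\lambda(t)$ stays in $[\min p(u), \max p(u)]$, and combined with the $L^\infty$ bounds this forces $\|F(u(t))\|_{L^\infty(\Omega)}$ to remain bounded on $[0, T_{max})$. If $T_{max} < \infty$, Lemma \ref{lem:blow:up:time} then requires $\int_\Omega g(u(T_{max}-)) = 0$. In case $\bf (H_1)$, since $g \le 0$ on $[1, \infty)$ with equality only at $1$, this would force $u(T_{max}-) \equiv 1$, contradicting the mass conservation identity \eqref{eq:mass-conservation} because $\int_\Omega u_0 > |\Omega|$; case $\bf (H_3)$ is symmetric. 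I expect the main obstacle to be case $\bf (H_2)$, where mass conservation alone does not rule out $\int g(u(T_{max}-)) = 0$. I would handle it by noting that since $g \in C^1$ vanishes at $0$ and $1$, one has $|g(Y)| \le C\,Y$ near $0$ and $|g(Y)| \le C\,(1-Y)$ near $1$; combined with the bound on $\lambda(t)$, a Gr\"onwall argument applied to $(ODE)$ shows trajectories starting in $(0, 1)$ cannot reach $\{0, 1\}$ in finite time. Consequently $u(x, T_{max}-) \in (0, 1)$ on the positive-measure set $\{u_0 \in (0, 1)\}$, which is nonempty because $\int_\Omega g(u_0) \ne 0$, so $\int_\Omega g(u(T_{max}-)) > 0$, a contradiction.
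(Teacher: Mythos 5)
Your proposal is correct and follows the same skeleton as the paper's proof: the representation $u(x,t)=Y(t;u_0(x))$, comparison with the constant solutions $Y\equiv 0$ and $Y\equiv 1$ via the monotonicity lemma, the observation that $\lambda(t)$ is a weighted mean of $p(u)$ with weights of constant sign (which is exactly the computation the paper performs to get $\dot Y(t;b)\le 0$, hence $Y(t;b)\le b$), the resulting uniform bound on $\|F(u(t))\|_{L^\infty(\Omega)}$, and the contradiction with Lemma \ref{lem:blow:up:time} using mass conservation in case $\bf (H_1)$. The one place where you genuinely diverge is the global-existence step under $\bf (H_2)$: the paper argues forward that $\int_\Omega g(u(T_{max}-))=0$ forces $u(T_{max}-)\in\{0,1\}$ a.e., then pulls this back by the backward uniqueness of $(ODE)$ to conclude $u_0\in\{0,1\}$ a.e., contradicting $\int_\Omega g(u_0)\neq 0$; you instead run a Gr\"onwall barrier argument showing that a trajectory starting in $(0,1)$ cannot reach $\{0,1\}$ in finite time, so that $g(u(\cdot,T_{max}-))>0$ on the positive-measure set $\{u_0\in(0,1)\}$. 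Both mechanisms rest on the same local Lipschitz property of $g$ and both are valid; the paper's version is slightly shorter since backward uniqueness is already implicitly invoked in Lemma \ref{lem:monotonicity:of:Y}, while yours is more self-contained and quantitative. A very minor point: your identification $\esssup_\Omega u(\cdot,t)=Y(t;M_0)$ is not needed and requires a word of justification (continuity and monotonicity of $Y(t;\cdot)$); it suffices, as the paper does, to note $u(x,t)\le Y(t;M_0)$ a.e. and then show $Y(t;M_0)\le M_0$ as long as $Y(t;M_0)$ exists.
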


\begin{proof}
For simplicity, we set
$$a:=\essinf_{\Omega} u_0, \qquad b:=\esssup_{\Omega} u_0.$$
We only prove (i) and (ii). The proof of (iii) is similar to that of (i).

(i) First, we show that \eqref{ine:h1} holds as long as the
solution $u$ exists and then deduce the global existence from Lemma \ref{lem:blow:up:time}. 
Let $Y(t;s)$ be the solution of $(ODE)$.
We remark that $b \ge 1$ and that $Y(t,1) \equiv 1$ for all $t\in [0, T_{max})$. The monotonicity of $Y(t;s)$
in $s$ implies that as long as $u, Y(t;b)$ both exist
\begin{equation}\label{eqs:need:fortheorem}
1 \equiv Y(t,1) \leq Y(t;u_0(x))=u(x,t) \leq Y(t;b)\quad\ a.e.\ \ x\in\Omega.
\end{equation}
The first inequality above implies the first inequality of \eqref{ine:h1} as long as the solution $u$ exists. 
It remains to prove the second inequality of \eqref{ine:h1}.  To that purpose,  it suffices to show that
\begin{equation}\label{ab-Y}
  Y(t;b)\leq b
\end{equation}
as long as the solution $Y(t;b)$
exists. In view of \eqref{eqs:need:fortheorem}, we have $1 \le u(x,t) \le Y(t;b)$. Then
the definition of $g$ and the monotonicity of $p$ imply that
$$g(Y(t;b)) \le 0, \quad g(u(x,t)) \le 0, \quad p(Y(t,b)) \ge p(u(x,t)),$$
for a.e. $x \in \Omega$. \tnr{These properties, together with the definition of $\lambda(t)$ in \eqref{definition:lamda:t}, imply that}
\tnr{\begin{align*}
\dot Y(t;b)&=g(Y(t,b))(p(Y(t;b))-\lambda(t))\\
&=g(Y(t,b))\left(p(Y(t;b))-\frac{\displaystyle\int_\Omega g(u(x,t))p(u(x,t))\,dx}{\displaystyle\int_\Omega g(u(x,t))\,dx} \right)\\
&=g(Y(t,b)) \frac{\displaystyle\int_\Omega g(u(x,t))[p(Y(t;b)-p(u(x,t))]\,dx}{\displaystyle\int_\Omega g(u(x,t))\,dx} \le 0.
\end{align*}}
Hence
$$Y(t,b) \le Y(0;b)=b,$$
which completes the proof of \eqref{ab-Y}. Thus \eqref{ine:h1} is satisfied 
as long as the solution $u$ exists.

Next we show that the solution $u$ exists globally. Suppose, by contradiction,  that  $T_{max}<\infty$. We have
for all $t \in [0, T_{max})$,
$$| \lambda(t) | \le \dfrac{\int_\Omega |g(u)p(u)|}{\left |\int_\Omega g(u) \right|}=\dfrac{\int_\Omega |g(u)|\,|p(u)|}{\int_\Omega |g(u)| } \le \max\{|\tn{p(1)}|, |p(b)|\}.$$
It follows that there exists $C>0$ such that $\|F(u(t))\|_{L^\infty(\Omega)} \le C$  for all $t \in [0, T_{max})$. By Lemma \ref{lem:blow:up:time}, $u(T_{max}-):=\lim_{t \uparrow T_{max}} u(t)$ exists  in $L^\infty(\Omega)$ and  
$$\int_\Omega g(u(T_{max}-))=0.$$ 
Since $u(x,t) \ge 1$ for a.e $x \in \Omega, t \in [0, T_{max})$, $u(x,T_{max}-) \ge 1$ for a.e. $x \in \Omega$. Hence $\int_\Omega g(u(T_{max}-))=0$ if and only if $u(x,T_{max}-) \equiv  1$.
The mass conservation property (cf. \eqref{eq:mass-conservation}) yields $\int_\Omega u_0  =|\Omega|$. Hence $u_0(x)=1$ for a.e $x \in \Omega$. This contradicts Hypothesis $\bf (H_1)$ so that $T_{max}=\infty$.

(ii)  Since  $Y(t,1) \equiv 1$, $Y(t,0) \equiv 0$, we deduce that
$$0 \equiv Y(t,0) \leq Y(t;u_0(x))=u(x,t) \le Y(t,1) \equiv 1\quad\ a.e.\ \ x\in\Omega.$$
This implies \eqref{ine:h2} as long as the solution $u$ exists. We now prove that $T_{max}=\infty$. Indeed, 
suppose, by contradiction,  that  $T_{max}<\infty$. Since $0 \le u(x,t) \le 1$ for a.e. $x \in \Omega$, and all $t \in [0, T_{max})$, $g(u(x,t)) \ge 0$ for a.e. $x \in \Omega$, and all $t \in [0, T_{max})$. 
Therefore
$$| \lambda(t) | \le \dfrac{\int_\Omega | g(u) p(u)|}{\left |\int_\Omega g(u) \right|}=\dfrac{\int_\Omega g(u) \, |p(u)|}{\int_\Omega g(u) } \le \max\{|p(0)|, |p(1)|\},$$
for all $t \in [0, T_{max})$.
It follows that there exists $C>0$ such that $\|F(u(t))\|_{L^\infty(\Omega)} \le C$  for all $t \in [0, T_{max})$. By Lemma \ref{lem:blow:up:time}, $u(T_{max}-):=\lim_{t \uparrow T_{max}} u(t)$ exists  in $L^\infty(\Omega)$ and  
$$\int_\Omega g(u(T_{max}-))=0.$$ 
This implies that
$u(T_{max}-)$ only takes two values $0$ and $1$. Or equivalently,
$Y(T_{max}-;u_0(x)) $ only takes two values $0$ and $1$. Thus
the backward uniqueness of the solution of  the initial value problem $(ODE)$ implies that
$u_0(x)$ only takes two values $0$ and $1$; hence $\int_\Omega g(u_0) =0$.
This contradicts Hypothesis $\bf (H_2)$ so that $T_{max}=\infty$.
\end{proof}

The result below follows from the proof of Theorem \ref{thm:existence:boundedness}.

\begin{corollary}\label{cor:boundedness:lamda(t)}
Assume that one of the hypotheses $\bf (H_1), (H_2), (H_3)$ holds and  let $\lambda(t)$ be defined by  \eqref{definition:lamda:t}. Then there exists $C>0$ such that
$$|\lambda(t)| \le C $$
$\mbox{for all~~} t \in[0, \infty).$
\end{corollary}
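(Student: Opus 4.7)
The plan is essentially to read off the uniform bound from the argument already carried out inside the proof of Theorem \ref{thm:existence:boundedness}. The key point is that under each of the hypotheses $\bf (H_1), (H_2), (H_3)$ the solution $u(\cdot, t)$ takes values in a fixed bounded interval for \emph{all} $t \ge 0$ (not only up to $T_{max}$), and moreover the sign of $g(u(x,t))$ is constant throughout $\Omega\times[0,\infty)$.

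First I would fix $t \in [0,\infty)$ and rewrite
\begin{equation*}
|\lambda(t)| \;=\; \left|\frac{\int_\Omega g(u(x,t))p(u(x,t))\,dx}{\int_\Omega g(u(x,t))\,dx}\right| \;\le\; \frac{\int_\Omega |g(u(x,t))|\,|p(u(x,t))|\,dx}{\bigl|\int_\Omega g(u(x,t))\,dx\bigr|}.
\end{equation*}
Because $g$ has constant sign on each of the intervals $(-\infty,0)$, $(0,1)$, $(1,\infty)$ (negative, positive, negative respectively), and since by Theorem \ref{thm:existence:boundedness} the essential range of $u(\cdot,t)$ lies in a single such interval (or its closure) depending on which hypothesis is assumed, the function $g(u(x,t))$ does not change sign in $x$. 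Hence the absolute value inside the denominator can be pulled inside the integral, yielding
\begin{equation*}
|\lambda(t)| \;\le\; \frac{\int_\Omega |g(u(x,t))|\,|p(u(x,t))|\,dx}{\int_\Omega |g(u(x,t))|\,dx} \;\le\; \operatorname*{ess\,sup}_{x \in \Omega}\,|p(u(x,t))|.
\end{equation*}

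Next I would use the $L^\infty$-bounds from Theorem \ref{thm:existence:boundedness} to control $|p(u(x,t))|$ uniformly in $t$. Under $\bf (H_1)$ we have $1 \le u(x,t) \le \esssup_\Omega u_0 =: b$, so by the monotonicity of $p$ we obtain $|p(u(x,t))| \le \max\{|p(1)|, |p(b)|\}$ for a.e.\ $x \in \Omega$. Under $\bf (H_2)$ we have $0 \le u(x,t) \le 1$, so $|p(u(x,t))| \le \max\{|p(0)|, |p(1)|\}$. Finally, under $\bf (H_3)$ we have $a := \essinf_\Omega u_0 \le u(x,t) \le 0$, so $|p(u(x,t))| \le \max\{|p(a)|, |p(0)|\}$. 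In every case the right-hand side is a constant $C$ depending only on $u_0$, $g$, $p$, which gives the desired bound $|\lambda(t)| \le C$ for all $t \ge 0$.

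There is essentially no obstacle here — the corollary is a direct harvest from the proof of Theorem \ref{thm:existence:boundedness}, where exactly the same computation was performed on the finite interval $[0, T_{max})$ under the a priori hypothesis $T_{max}<\infty$. The only thing worth emphasizing is that, now that global existence has been established, one can legitimately apply this bound on the whole half-line $[0,\infty)$, and that the sign-constancy of $g(u)$ (which allowed pulling the absolute value inside the integral) is preserved for all time precisely because of the invariant interval statements (i)–(iii) of Theorem \ref{thm:existence:boundedness}.
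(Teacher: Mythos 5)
Your proof is correct and is essentially the paper's own argument: the paper states that the corollary ``follows from the proof of Theorem \ref{thm:existence:boundedness}'', where exactly the estimate $|\lambda(t)| \le \int_\Omega |g(u)|\,|p(u)|\,/\int_\Omega |g(u)| \le \max\{|p(\cdot)|\}$ over the invariant interval is carried out. Your only addition is to make explicit that global existence lets one extend the bound from $[0,T_{max})$ to $[0,\infty)$, which is exactly the intended reading.
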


\section{Boundedness of the solution and one-dimensional associated problem $(P^\sharp)$}
All the results in this section are similar to those of \cite[Section 3]{HIlhorst-Matano-Nguyen-Weber}. 
We recall and state some important results. 
Let $w$ be a function from $\Omega$ to $\R$ and let $\Omega^\sharp:=(0, |\Omega|) \subset \R$. The distribution function of $w$ is given by
$$\mu_w(s):=|\{x \in \Omega: w(x)>s\}|.$$
\begin{definition}
The (one-dimensional) decreasing rearrangement of $w$, denoted by $w^\sharp$, is defined on $\overline \Omega^\sharp=[0, |\Omega|]$ by
\begin{equation}\label{definition:w:sharp}
\begin{cases}
&w^\sharp(0):=\esssup (w)\\
&w^\sharp(y)=\inf\{ s:   \mu_w(s )<y\}, \quad y>0.
\end{cases}
\end{equation}
 \end{definition}

 \begin{remark}\label{chuy:tinhchat:w:shap}
The function $w^\sharp$ is nonincreasing on $\Omega^\sharp$ and we have
$\mu_w(s)=\mu_{w^\sharp}(s)$ for all $s \in \R$. Moreover, if $a \le w(x) \le b$ a.e. $x \in \Omega$, then
$$a \le w^\sharp(y) \le b \mbox{~~for all~~} y \in \Omega^\sharp.$$
 \end{remark}

\begin{theorem}\label{chap3:3chapchapthm:ws:star}
Let one of the hypotheses $\bf (H_1), (H_2), (H_3)$ hold. 
We define
\begin{equation}\label{defition:u:sharp}
u^\sharp(y,t):=(u(t))^\sharp(y) \mbox{~~on~~} \Omega^\sharp \times [0,+\infty).
\end{equation}
Then
 $u^\sharp$ is the unique solution \tn{in} $C^1([0,\infty); L^\infty(\Omega^\sharp))$ of Problem $(P^\sharp)$
\begin{equation*}
 (P^\sharp) \ \ \left\{
\begin{array}{ll}
\tn{\dfrac{dv}{dt}=g(v)p(v)-g(v) \dfrac{\displaystyle \int_\Omega  g(v)p(v)}{\displaystyle \int_\Omega g(v)}} \quad \quad & t > 0, \vspace{8pt}\\
v(0)=u_0^\sharp. 
\end{array}
\right.
\end{equation*}
Moreover, for all $t \ge 0$,
\begin{equation}\label{chap3:6:09:bdt:cho:nghiem:u:sao:ha:ha}
u^\sharp(y,t)=Y(t;u_0^\sharp(y)) \mbox{~~for a.e.~~}y\in  \Omega^\sharp,
\end{equation} 
and the assertions (i), (ii), (iii) of Theorem 
\ref{thm:existence:boundedness} hold for the function $u^\sharp$.
\end{theorem}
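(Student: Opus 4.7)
The plan is to establish the identity \eqref{chap3:6:09:bdt:cho:nghiem:u:sao:ha:ha} as the central formula, from which the remaining assertions follow almost mechanically. Everything hinges on the fact that Lemma \ref{lem:monotonicity:of:Y} tells us $s \mapsto Y(t;s)$ is continuous and strictly increasing on its domain of existence, so that composition with $u_0$ is a monotone relabelling of values. Combined with representation \eqref{problem:ODE:22:6:3:14}, this reduces the theorem to an essentially pointwise computation.

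First I would prove \eqref{chap3:6:09:bdt:cho:nghiem:u:sao:ha:ha}. Fix $t \ge 0$ and write $\Psi_t(s) := Y(t;s)$. By Lemma \ref{lem:monotonicity:of:Y}, $\Psi_t$ is continuous and strictly increasing, hence a homeomorphism onto its image. Using \eqref{problem:ODE:22:6:3:14}, for every $s \in \R$,
\begin{equation*}
\mu_{u(t)}(s) = |\{x \in \Omega : \Psi_t(u_0(x)) > s\}| = |\{x \in \Omega : u_0(x) > \Psi_t^{-1}(s)\}| = \mu_{u_0}(\Psi_t^{-1}(s)).
\end{equation*}
Inserting this into Definition \eqref{definition:w:sharp} and changing variables $s = \Psi_t(\sigma)$ gives $u^\sharp(y,t) = \Psi_t(u_0^\sharp(y)) = Y(t;u_0^\sharp(y))$, which is \eqref{chap3:6:09:bdt:cho:nghiem:u:sao:ha:ha}.

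Next I would verify that $u^\sharp \in C^1([0,\infty); L^\infty(\Omega^\sharp))$ solves $(P^\sharp)$. Since $u_0^\sharp \in L^\infty(\Omega^\sharp)$ takes values in a bounded interval (Remark \ref{chuy:tinhchat:w:shap}), and $Y(\cdot;s)$ is $C^1$ with $\dot Y$ bounded uniformly for $s$ in that interval (using Corollary \ref{cor:boundedness:lamda(t)}), differentiation under the composition yields
\begin{equation*}
\frac{\partial u^\sharp}{\partial t}(y,t) = \dot Y(t;u_0^\sharp(y)) = g(u^\sharp(y,t))\bigl(p(u^\sharp(y,t)) - \lambda(t)\bigr)
\end{equation*}
in $L^\infty(\Omega^\sharp)$. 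The key nonlocal identity is the equi-measurability of $u(t)$ and $u^\sharp(t)$: for any continuous $\Phi$,
\begin{equation*}
\int_\Omega \Phi(u(x,t))\,dx = \int_{\Omega^\sharp} \Phi(u^\sharp(y,t))\,dy.
\end{equation*}
Applying this to $\Phi = gp$ and $\Phi = g$ recasts $\lambda(t)$ as the nonlocal quotient built from $u^\sharp$ on $\Omega^\sharp$, so that $u^\sharp$ indeed satisfies the differential equation in $(P^\sharp)$. The initial condition $u^\sharp(\cdot,0) = u_0^\sharp$ is clear, and $\int_{\Omega^\sharp} g(u_0^\sharp) = \int_\Omega g(u_0) \ne 0$ ensures the equation is well-posed.

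Uniqueness follows at once by applying Lemma \ref{chap3:3chapchaptinhchat:lipchitz:23:08} with $\Omega^\sharp$ in place of $\Omega$: any two solutions of $(P^\sharp)$ agree on a small interval by local Lipschitz continuity of $F$, and since both remain in $L^\infty(\Omega^\sharp)$ with $\int_{\Omega^\sharp} g(\cdot) \ne 0$ (a fact inherited from the uniform bounds of Theorem \ref{thm:existence:boundedness} transported via \eqref{chap3:6:09:bdt:cho:nghiem:u:sao:ha:ha}), the argument iterates to cover $[0,\infty)$. Finally, the assertions (i)--(iii) for $u^\sharp$ are immediate from Remark \ref{chuy:tinhchat:w:shap}: the rearrangement inherits the pointwise bounds of $u$, and these bounds together with the identities above show \eqref{ine:h1}--\eqref{ine:h2} hold for $u^\sharp$. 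The main obstacle I expect is the bookkeeping around the rearrangement formula at $t=0$ versus $t>0$, specifically making rigorous that strict monotonicity of $\Psi_t$ transfers to the infimum defining $w^\sharp$; Lemma \ref{lem:monotonicity:of:Y} together with the continuity of $Y(t;\cdot)$ is precisely what makes this step go through cleanly.
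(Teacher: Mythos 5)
Your proof is correct and follows exactly the route the paper intends: the paper itself does not prove this theorem but defers it to \cite[Section 3]{HIlhorst-Matano-Nguyen-Weber}, and the machinery it sets up for that purpose --- the representation $u(x,t)=Y(t;u_0(x))$ in \eqref{problem:ODE:22:6:3:14} and the strict monotonicity of $s\mapsto Y(t;s)$ in Lemma \ref{lem:monotonicity:of:Y} --- is precisely what you use to show that the decreasing rearrangement commutes with the increasing map $\Psi_t$, after which equi-measurability identifies the nonlocal term and the remaining assertions follow. The only points deserving a word of care (and which you already flag) are the continuity of $s\mapsto Y(t;s)$, which comes from continuous dependence on initial data rather than from Lemma \ref{lem:monotonicity:of:Y} itself, and the values of $s$ outside the range of $\Psi_t$ in the change of variables.
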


 \begin{lemma}[{\cite[Lemma 3.7]{HIlhorst-Matano-Nguyen-Weber}}] \label{chuyenthanhbode:quenheeeee:lemmme:u-shapr:u}
 Let $u$ be the solution of $(P)$ with $u_0 \in L^\infty(\Omega)$ and let $u^\sharp$ be as in
 \eqref{defition:u:sharp}. Then
 \begin{align}\label{danthuc123:u-sparp-u:13:3:14}
\|u^\sharp(t)-u^\sharp(\tau)\|_{L^1(\Omega^\sharp)}=\|u(t)-u(\tau)\|_{L^1(\Omega)},
\end{align}
for any $t , \tau \in [0, \infty)$.
\end{lemma}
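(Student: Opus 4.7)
The plan is to exploit the Lagrangian structure provided by the auxiliary ODE: by \eqref{problem:ODE:22:6:3:14} and \eqref{chap3:6:09:bdt:cho:nghiem:u:sao:ha:ha}, both $u$ and $u^\sharp$ are obtained by transporting, respectively, $u_0$ and its rearrangement $u_0^\sharp$ along the \emph{same} scalar flow $s\mapsto Y(\cdot;s)$. Fix $t,\tau\in[0,\infty)$ and set
$$\phi(s):=Y(t;s)-Y(\tau;s).$$
The map $\phi$ is continuous on the essential range of $u_0$: this follows from standard ODE dependence on the initial datum, since the right-hand side of $(ODE)$ is locally Lipschitz in $Y$ and $\lambda(\cdot)$ is continuous (cf. Corollary \ref{cor:boundedness:lamda(t)}). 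With this notation one has
$$u(x,t)-u(x,\tau)=\phi(u_0(x))\ \text{a.e. in }\Omega,\qquad u^\sharp(y,t)-u^\sharp(y,\tau)=\phi(u_0^\sharp(y))\ \text{a.e. in }\Omega^\sharp.$$

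Consequently, the claimed identity reduces to showing
$$\int_\Omega |\phi(u_0(x))|\,dx=\int_{\Omega^\sharp} |\phi(u_0^\sharp(y))|\,dy,$$
which is a direct consequence of the equimeasurability of $u_0$ and $u_0^\sharp$. Indeed, Remark \ref{chuy:tinhchat:w:shap} gives $\mu_{u_0}=\mu_{u_0^\sharp}$, so the two functions share the same distribution function; by the standard layer-cake/change-of-variables identity, for every bounded Borel function $\psi:\R\to\R$,
$$\int_\Omega \psi(u_0(x))\,dx=\int_{\Omega^\sharp}\psi(u_0^\sharp(y))\,dy.$$
Applying this with $\psi=|\phi|$ yields the required equality.

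I do not anticipate a substantial obstacle. The only points to verify are the measurability (in fact, continuity) of $\phi$ and its boundedness on the essential range of $u_0$, both of which are immediate from the smoothness of the ODE flow and from $u_0\in L^\infty(\Omega)$. The conceptual content of the lemma is that transporting by a common, possibly time-dependent, scalar dynamics produces an equality — and not merely the inequality $\|u^\sharp(t)-u^\sharp(\tau)\|_{L^1(\Omega^\sharp)}\le\|u(t)-u(\tau)\|_{L^1(\Omega)}$ that holds for general pairs $(w,w^\sharp)$ — because the difference $u(x,t)-u(x,\tau)$ is a pointwise function of $u_0(x)$ alone.
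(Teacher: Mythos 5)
Your argument is correct: writing $u(\cdot,t)-u(\cdot,\tau)=\phi(u_0(\cdot))$ with $\phi(s)=Y(t;s)-Y(\tau;s)$ via \eqref{problem:ODE:22:6:3:14} and \eqref{chap3:6:09:bdt:cho:nghiem:u:sao:ha:ha}, and then invoking equimeasurability of $u_0$ and $u_0^\sharp$, is exactly the mechanism that makes \eqref{danthuc123:u-sparp-u:13:3:14} an equality rather than the generic inequality. The paper itself gives no proof here --- it cites \cite[Lemma 3.7]{HIlhorst-Matano-Nguyen-Weber} --- and your Lagrangian-flow-plus-equimeasurability argument is the same one underlying that reference, so there is nothing to add.
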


\begin{corollary}[{\cite[Corollary 3.9]{HIlhorst-Matano-Nguyen-Weber}}] \label{quenheeeee:lemmme:u-shapr:u}
Let $\{t_n\}$ be a sequence of positive numbers such that $t_n \to \infty$ as $n \to \infty$. Then
 the following statements are equivalent
\begin{enumerate}[label=\emph{(\alph*)}]
\item $u^\sharp(t_n) \to \psi$ in $L^1(\Omega^\sharp)$ as $n \to \infty$
for some $\psi\in L^1(\Omega^\sharp)$;
\item $u(t_n) \to \varphi$ in $L^1(\Omega)$ as $n \to \infty$
for some $\varphi\in L^1(\Omega)$ with $\varphi^\sharp=\psi$.
\end{enumerate}
\end{corollary}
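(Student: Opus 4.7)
The plan is to rely crucially on the isometry property provided by Lemma \ref{chuyenthanhbode:quenheeeee:lemmme:u-shapr:u}, combined with the well-known nonexpansive property of the monotone rearrangement in $L^1$, namely $\|f^\sharp - h^\sharp\|_{L^1(\Omega^\sharp)} \le \|f - h\|_{L^1(\Omega)}$ for any $f,h \in L^1(\Omega)$.

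For the implication (b) $\Rightarrow$ (a), I would argue directly. Assume $u(t_n) \to \varphi$ in $L^1(\Omega)$. Applying the nonexpansiveness of rearrangement to $f = u(t_n)$ and $h = \varphi$ yields
\[
\|u^\sharp(t_n) - \varphi^\sharp\|_{L^1(\Omega^\sharp)} \le \|u(t_n) - \varphi\|_{L^1(\Omega)} \to 0,
\]
so that $u^\sharp(t_n) \to \varphi^\sharp$ in $L^1(\Omega^\sharp)$. Setting $\psi := \varphi^\sharp$ proves (a).

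For the more substantive direction (a) $\Rightarrow$ (b), I would first show that $\{u(t_n)\}$ is Cauchy in $L^1(\Omega)$. This is immediate from Lemma \ref{chuyenthanhbode:quenheeeee:lemmme:u-shapr:u}, since
\[
\|u(t_n) - u(t_m)\|_{L^1(\Omega)} = \|u^\sharp(t_n) - u^\sharp(t_m)\|_{L^1(\Omega^\sharp)},
\]
and the right-hand side tends to $0$ as $n,m \to \infty$ because $\{u^\sharp(t_n)\}$ converges in $L^1(\Omega^\sharp)$. By completeness of $L^1(\Omega)$, there exists $\varphi \in L^1(\Omega)$ with $u(t_n) \to \varphi$ in $L^1(\Omega)$. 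Applying the implication (b) $\Rightarrow$ (a) just proved to this $\varphi$, we deduce $u^\sharp(t_n) \to \varphi^\sharp$ in $L^1(\Omega^\sharp)$. Combining this with the hypothesis $u^\sharp(t_n) \to \psi$ and uniqueness of limits in $L^1(\Omega^\sharp)$ gives $\psi = \varphi^\sharp$, completing the argument.

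There is essentially no obstacle here: once Lemma \ref{chuyenthanhbode:quenheeeee:lemmme:u-shapr:u} is available, the proof reduces to a completeness argument and an invocation of the contraction property of rearrangement. The only point deserving care is not to confuse the two inequalities $\|u^\sharp(t)-u^\sharp(\tau)\|_{L^1(\Omega^\sharp)} \le \|u(t)-u(\tau)\|_{L^1(\Omega)}$ (which holds generally and would only yield one direction) with the full equality in Lemma \ref{chuyenthanhbode:quenheeeee:lemmme:u-shapr:u} (which is what allows transfer of the Cauchy property from $\Omega^\sharp$ back to $\Omega$, and hence gives the nontrivial implication (a) $\Rightarrow$ (b)).
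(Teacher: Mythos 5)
Your proof is correct. Note that the paper itself gives no proof of this corollary---it is quoted verbatim from \cite[Corollary 3.9]{HIlhorst-Matano-Nguyen-Weber}---so there is nothing in the text to compare against; but your argument is the natural one and is fully supported by the tools the paper makes available: the direction (b) $\Rightarrow$ (a) uses the standard $L^1$-nonexpansiveness of the decreasing rearrangement (the very ``general property of the rearrangement theory'' invoked after \eqref{eq:24}), and the direction (a) $\Rightarrow$ (b) correctly exploits the \emph{equality} of Lemma \ref{chuyenthanhbode:quenheeeee:lemmme:u-shapr:u} to transfer the Cauchy property back to $L^1(\Omega)$, which is exactly the point the paper emphasizes. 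Your closing remark distinguishing the one-sided inequality from the full isometry identifies precisely why the nontrivial implication works.
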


\tn{The following proposition follows from similar results in \cite[Lemma 3.5 and Proposition 3.10]{HIlhorst-Matano-Nguyen-Weber}.}
\begin{proposition}\label{boundedness:solution:P:15} Let one of the hypotheses $\bf (H_1), (H_2), (H_3)$ hold. Then
$\{ u(t): t \ge 0 \}$  is relatively compact in $L^1(\Omega)$ and the set $\{ u^\sharp(t): t \ge 0 \}$  is relatively compact in $L^1(\Omega^\sharp)$.
\end{proposition}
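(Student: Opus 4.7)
The plan is to first establish compactness of the rearranged orbit $\{u^\sharp(t): t \ge 0\}$ in $L^1(\Omega^\sharp)$, exploiting the one--dimensional monotone structure of $u^\sharp$, and then to transfer this compactness to $\{u(t)\}$ by means of the $L^1$--isometry furnished by Lemma \ref{chuyenthanhbode:quenheeeee:lemmme:u-shapr:u}.

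For the first step, I observe that, by construction, $y \mapsto u^\sharp(y,t)$ is non--increasing on $\Omega^\sharp = (0,|\Omega|)$ for every $t \ge 0$. Theorem \ref{thm:existence:boundedness} together with Remark \ref{chuy:tinhchat:w:shap} yields a constant $C>0$, depending only on $u_0$, such that $\|u^\sharp(t)\|_{L^\infty(\Omega^\sharp)} \le C$ uniformly in $t \ge 0$. Since the total variation of a monotone function on an interval equals the absolute difference of its extreme values, the family $\{u^\sharp(t): t \ge 0\}$ is uniformly bounded in $BV(\Omega^\sharp)$. The classical one--dimensional compact embedding $BV(\Omega^\sharp) \hookrightarrow\hookrightarrow L^1(\Omega^\sharp)$ (equivalently, Helly's selection theorem) then implies that this orbit is relatively compact in $L^1(\Omega^\sharp)$.

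To transfer the conclusion to $u$ itself, I pick any sequence $\{t_n\} \subset [0,\infty)$; by what was just proved, there is a subsequence (still denoted $\{t_n\}$) along which $u^\sharp(t_n)$ is Cauchy in $L^1(\Omega^\sharp)$. Lemma \ref{chuyenthanhbode:quenheeeee:lemmme:u-shapr:u} then gives the identity
\[
\|u(t_n) - u(t_m)\|_{L^1(\Omega)} = \|u^\sharp(t_n) - u^\sharp(t_m)\|_{L^1(\Omega^\sharp)},
\]
so $\{u(t_n)\}$ is also Cauchy, hence convergent, in $L^1(\Omega)$. Since $\{t_n\}$ was arbitrary, $\{u(t): t\ge 0\}$ is relatively compact in $L^1(\Omega)$. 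The main conceptual point---and the reason the detour through $u^\sharp$ is needed at all---is that direct compactness of $\{u(t)\}$ is unavailable because Problem $(P)$ carries no diffusion term; the rearrangement trick trades the missing spatial regularity in $\R^N$ for the automatic monotonicity, and hence automatic $BV$ bound, of $u^\sharp$ in its single variable.
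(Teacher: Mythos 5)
Your proof is correct and follows exactly the strategy the paper intends: a uniform $L^\infty$ bound plus monotonicity gives a uniform $BV(\Omega^\sharp)$ bound on $u^\sharp(t)$, Helly/compact embedding gives relative compactness in $L^1(\Omega^\sharp)$, and the isometry of Lemma \ref{chuyenthanhbode:quenheeeee:lemmme:u-shapr:u} transfers this to $\{u(t)\}$. The paper itself only cites \cite{HIlhorst-Matano-Nguyen-Weber} for this proposition, but the argument sketched in its introduction is precisely yours, so you have in effect supplied the omitted details correctly.
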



\section{Lyapunov functional and $\omega$-limit set for $(P)$}
We define three Lyapunov functionals according to whether the initial function satisfies 
either Hypothesis $\bf (H_1) , (H_2)$ or $\bf (H_3)$.
More precisely, we define for $i=1, 2, 3$ the functional $E_i$ by
\begin{equation}\label{defi:lyapunove:function}
E_i(u)=(-1)^{i+1}\int_{\Omega} \mathcal P(u),
\end{equation}
where 
$$\mathcal P(s)=\int_0^s p(\tau)\,d \tau.$$

\begin{lemma}[Lyapunov functional]\label{chap3:3chapchapLyapunov-functional}
Assume that the hypotheses ${\bf (H_i)}$ holds either for $i=1$, or for $i=2$, or for $i=3$. Then
\begin{enumerate}[label=\emph{(\roman*)}]
\item There exists $C>0$ such that for all $\tau_2 > \tau_1 \ge 0$, 
\begin{align*}
E_i(u(\tau_2))-E_i(u(\tau_1))&= (-1)^{i+1}\int_{\tau_1}^{\tau_2}\int_{\Omega} g(u)  (p(u)-\lambda(t))^2 \,dxdt\\
&\le -C\int_{\tau_1}^{\tau_2}\int_{\Omega} |u_t|^2  \,dxdt \le 0.
\end{align*} 
\item $E_i(u(\cdot))$ is continuous and non increasing on $[0, \infty)$, and \tn{the limit $E_{i\infty}:= \lim_{t \to \infty} E_i(u(t))$ exists}. 
\end{enumerate}
\end{lemma}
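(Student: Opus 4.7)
The plan is to compute $\frac{d}{dt}E_i(u(t))$ directly, use the definition of $\lambda(t)$ to eliminate a cross term, and then bound the resulting integrand from above by $-C|u_t|^2$ using that $u$ is uniformly bounded by Theorem \ref{thm:existence:boundedness}. Continuity and existence of the limit in (ii) will then follow from basic calculus.

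First I would differentiate $E_i(u(t))$ in time. Since $u \in C^1([0,\infty); L^\infty(\Omega))$ and $\mathcal P$ is $C^1$, the chain rule combined with dominated convergence gives
\begin{equation*}
\frac{d}{dt} E_i(u(t)) = (-1)^{i+1}\int_\Omega p(u) u_t\, dx.
\end{equation*}
Substituting $u_t = g(u)(p(u)-\lambda(t))$ and writing $p(u) = (p(u)-\lambda(t)) + \lambda(t)$, I would split the integral as
\begin{equation*}
\int_\Omega g(u)(p(u)-\lambda(t))^2\, dx + \lambda(t) \int_\Omega g(u)(p(u)-\lambda(t))\, dx.
\end{equation*}
The second term vanishes by the very definition of $\lambda(t)$ in \eqref{definition:lamda:t}, namely $\int_\Omega g(u)p(u) = \lambda(t)\int_\Omega g(u)$. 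Integrating from $\tau_1$ to $\tau_2$ then yields the stated equality.

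For the middle inequality, I would observe case by case that $(-1)^{i+1}g(u) \le 0$ a.e., using the sign information of $g$ combined with the invariant regions in Theorem \ref{thm:existence:boundedness}: under $\bf (H_1)$ we have $u \ge 1$ so $g(u)\le 0$ and $(-1)^{i+1}=1$; under $\bf (H_2)$ we have $0\le u\le 1$ so $g(u)\ge 0$ and $(-1)^{i+1}=-1$; under $\bf (H_3)$ we have $u\le 0$ so $g(u)\le 0$ and $(-1)^{i+1}=1$. Hence $(-1)^{i+1}g(u) = -|g(u)|$ pointwise. Since $u$ is uniformly bounded, $|g(u)| \le K$ for some constant $K>0$ depending only on the bound of $u_0$ and on $g$, so $|g(u)| \ge K^{-1} g(u)^2$. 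This gives
\begin{equation*}
(-1)^{i+1} g(u)(p(u)-\lambda(t))^2 = -|g(u)|(p(u)-\lambda(t))^2 \le -K^{-1} g(u)^2 (p(u)-\lambda(t))^2 = -K^{-1}|u_t|^2,
\end{equation*}
and integrating over $\Omega\times(\tau_1,\tau_2)$ with $C := K^{-1}$ yields the desired chain of inequalities. The final bound by zero is immediate.

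For (ii), monotonicity of $t \mapsto E_i(u(t))$ is a direct consequence of (i). Continuity comes from $u\in C^1([0,\infty);L^\infty(\Omega))$ together with the local Lipschitz continuity of $\mathcal P$ on bounded sets, which gives $|E_i(u(t))-E_i(u(s))| \le C_1 \|u(t)-u(s)\|_{L^1(\Omega)}$. Since $u$ is uniformly bounded, $|\mathcal P(u(x,t))|$ is bounded independently of $t$, so $E_i(u(t))$ is bounded; being non-increasing and bounded below it converges as $t\to\infty$, which defines $E_{i\infty}$. The only mildly delicate point is the sign bookkeeping in the case-by-case check above; everything else is a straightforward computation.
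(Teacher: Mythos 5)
Your proposal is correct and follows essentially the same route as the paper: differentiate $E_i$, use the identity $\int_\Omega g(u)(p(u)-\lambda(t))\,dx=0$ (which the paper invokes in the equivalent form $\int_\Omega u_t=0$) to reduce to $(-1)^{i+1}\int_\Omega g(u)(p(u)-\lambda(t))^2$, and then bound $(-1)^{i+1}g(u)\le -C\,g(u)^2$ on the invariant region from Theorem \ref{thm:existence:boundedness}. The paper writes this out only for $i=1$ with $C=-1/\min g$ on $[1,\esssup u_0]$, which is the same estimate as your $|g(u)|\ge K^{-1}g(u)^2$; your case-by-case sign check and the treatment of (ii) match the paper's argument.
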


\tn{\begin{remark}\label{rem:unique;e:infty}
	Note that the solution orbit $\{u(t): t \ge 0\}$ is uniquely defined by the initial function $u_0$. Hence
 $E_{i\infty}$---the limit of the Lyapunov functional along the solution orbit---is also uniquely defined by the initial function.
\end{remark}}

\begin{proof}[\bf Proof of Lemma \ref{chap3:3chapchapLyapunov-functional}]
(i) We only give the proof for the case $i=1$. We have
\begin{align*}
\frac{d}{dt} E_1(u(t))&=\frac{d}{dt} \int_{\Omega} \mathcal P(u) \,dx
= \int_{\Omega} p(u)u_t \,dx.
\end{align*}
Since 
$$\int_{\Omega} u_t \,dx=0,$$
it follows that
\begin{align}
\frac{d}{dt} E_1(u(t)) &= \int_{\Omega} (p(u)-\lambda(t)) u_t \,dx\notag\\
&=\int_{\Omega} g(u)(p(u)-\lambda(t))^2 \,dx.  \label{bdt:cho:lyapunov18:02:2014}
\end{align}
Set 
$$C=-\frac{1}{\min_{s \in [1,\esssup u_0]} g(s)}>0.$$
Then, since for all $t\ge 0$,
$$1 \le u(t) \le \esssup u_0 \mbox{~~a.e. in~~}\Omega,$$
we have, for all $t \ge 0$,
$$-\frac{1}{C}  \le g(u(t)) \le 0 \mbox{~~a.e. in~~}\Omega.$$
As a consequence, for all $t \ge 0$,
$$g(u(t)) \le - C g^2(u(t))  \mbox{~~a.e in~~}\Omega.$$
Substituting this inequality into \eqref{bdt:cho:lyapunov18:02:2014} yields
\begin{align*}
\frac{d}{dt} E_1(u(t))  &\le-C\int_\Omega g^2(u)(p(u)-\lambda(t))^2 \,dx\\
&= -C \int_\Omega |u_t|^2 \,dx \le 0,
\end{align*}
which proves (i).

(ii) As a consequence of (i), $E_i(u(\cdot))$ is continuous and nonincreasing. Moreover, $E_i$ is bounded from below. Therefore there exists  the limit of $E_i(u(t))$ as $t \to \infty$, which completes the proof of (ii).
\end{proof}

\begin{proposition}\label{cautruc:omega:limit}
Assume that one of the hypotheses ${\bf (H_i)}, (i=1,2,3)$ holds. Then 
\begin{enumerate}[label=\emph{(\roman*)}]
\item $\omega(u_0)$ a nonempty set in $L^1(\Omega)$.
\item\tn{Let $E_{i\infty}$ be given in Lemma \ref{chap3:3chapchapLyapunov-functional}, we have}
\begin{equation}\label{eq:rangbuoc:omega}
\tn{E_i(\varphi)=E_{i\infty} \ \ \mbox{for all} \ \ \varphi \in \omega(u_0).}
\end{equation}
\tn{In other words,  $E_i(\cdot)$ is constant on $\omega(u_0)$.} 
\item Any element $\varphi \in \omega(u_0)$ either satisfies $\int_\Omega g(\varphi)\,dx=0$ or is a stationary solution of Problem $(P)$. 
\end{enumerate}
\end{proposition}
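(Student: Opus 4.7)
My plan is as follows. For (i), the nonemptiness is immediate from Proposition \ref{boundedness:solution:P:15}: given any sequence $t_n \to \infty$, the relative compactness of the orbit in $L^1(\Omega)$ yields a subsequence along which $u(t_n)$ converges in $L^1(\Omega)$ to some element, which by definition belongs to $\omega(u_0)$.

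For (ii), I would combine the uniform $L^\infty$-bound from Theorem \ref{thm:existence:boundedness} with the continuity of $\mathcal P$. Given $\varphi \in \omega(u_0)$ and $t_n \to \infty$ with $u(t_n) \to \varphi$ in $L^1(\Omega)$, the uniform $L^\infty$-bound promotes this to $L^p$-convergence for every finite $p$, and dominated convergence yields $E_i(u(t_n)) \to E_i(\varphi)$. Combined with $E_i(u(t_n)) \to E_{i\infty}$ from Lemma \ref{chap3:3chapchapLyapunov-functional}(ii), this forces $E_i(\varphi) = E_{i\infty}$.

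Part (iii) is the substantive statement. Integrating the dissipation inequality of Lemma \ref{chap3:3chapchapLyapunov-functional}(i) over $[0,\infty)$ and using that $E_i$ is bounded below (the orbit being uniformly bounded in $L^\infty$) gives
\begin{equation*}
\int_0^\infty \int_\Omega |u_t|^2 \, dx \, dt < \infty.
\end{equation*}
Fix $\varphi \in \omega(u_0)$ with $\int_\Omega g(\varphi) \neq 0$ and pick $t_n \to \infty$, $t_n \ge 1$, such that $u(t_n) \to \varphi$ in $L^1(\Omega)$. Introduce the translated orbits $v_n(s) := u(t_n + s)$ for $s \in [-1,1]$. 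The dissipation bound yields $\int_{-1}^1 \|u_t(t_n + \sigma)\|_{L^2(\Omega)}^2 \, d\sigma \to 0$, so Cauchy--Schwarz applied to $v_n(s) - v_n(0) = \int_0^s u_t(t_n + \sigma) \, d\sigma$ produces $\|v_n(s) - v_n(0)\|_{L^2(\Omega)} \to 0$ uniformly in $s \in [-1,1]$. Combined with the uniform $L^\infty$-bound, which turns $u(t_n) \to \varphi$ in $L^1$ into convergence in $L^2(\Omega)$, this delivers $v_n(s) \to \varphi$ in $L^2(\Omega)$ uniformly in $s \in [-1,1]$.

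Because $\int_\Omega g(\varphi) \neq 0$, the uniform $L^\infty$-bound and estimates in the spirit of Lemma \ref{chap3:3chapchaptinhchat:lipchitz:23:08} then give $F(v_n(s)) \to F(\varphi)$ in $L^1(\Omega)$ uniformly in $s$. Passing to the limit in $v_n(s) - v_n(0) = \int_0^s F(v_n(\sigma)) \, d\sigma$ forces $0 = s\, F(\varphi)$ in $L^1(\Omega)$ for every $s \in [-1,1]$, so $F(\varphi) = 0$ almost everywhere on $\Omega$, i.e.\ $\varphi$ is a stationary solution. The main obstacle, as I see it, is ensuring that $F(v_n(s)) \to F(\varphi)$ uniformly in $s$ so that limit and time integral commute; the assumption $\int_\Omega g(\varphi) \neq 0$ is precisely what makes the nonlocal denominator $\int_\Omega g(v_n(s))$ stay bounded away from zero uniformly in $(n,s)$ for $n$ large, which is what unlocks this step.
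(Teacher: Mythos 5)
Your proposal is correct, and parts (i) and (ii) coincide with the paper's proof. For part (iii) the overall architecture is also the same as the paper's: finiteness of $\int_0^\infty\int_\Omega |u_t|^2\,dx\,dt$ from the Lyapunov functional, convergence of the time-translates $u(t_n+s)\to\varphi$ via Cauchy--Schwarz on the tail of the dissipation integral, and the observation that $\int_\Omega g(\varphi)\neq 0$ keeps the nonlocal denominator bounded away from zero so that $F(u(t_n+s))\to F(\varphi)$ in $L^1(\Omega)$. The concluding step, however, differs: the paper passes to the limit in the identity $\int_{t_n}^{t_n+1}\int_\Omega F^2(u)\,dx\,dt=\int_{t_n}^{t_n+1}\int_\Omega |u_t|^2\,dx\,dt\to 0$ to conclude $\int_\Omega F^2(\varphi)\,dx=0$, whereas you pass to the limit in the integrated equation $u(t_n+s)-u(t_n)=\int_0^s F(u(t_n+\sigma))\,d\sigma$ to obtain $0=s\,F(\varphi)$ for all $s\in[-1,1]$. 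Both mechanisms are valid and rest on the same preliminary lemmas; yours is marginally more economical in that it only needs $L^1$-convergence of $F$ rather than of $F^2$, while the paper's exploits the gradient-flow identity $|u_t|^2=F(u)^2$ directly. Your remark that the hypothesis $\int_\Omega g(\varphi)\neq 0$ is exactly what makes the denominator uniformly controllable is precisely the point where the paper invokes the same hypothesis.
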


\begin{proof}
(i) follows from Proposition \ref{boundedness:solution:P:15}. (ii) 
\tn{Let $\varphi \in \omega(u_0)$ and let $t_n\to \infty$ be a sequence such that
\begin{equation*}
u(t_n) \to \varphi\mbox{~~in~~}L^1(\Omega) \mbox{~~as~~} n\to\infty.
\end{equation*}
Since $\{u(x,t_n)\}$ is uniformly bounded, the convergence $u(t_n) \to \varphi$ implies
$E_i(u(t_n)) \to E_i(\varphi)$ as $n \to \infty$. Hence in view of  Lemma \ref{chap3:3chapchapLyapunov-functional}\,(ii) we have} 
$$\tn{E_i(\varphi)=\lim_{n \to \infty} E_i(u(t_n))=E_{i\infty}}.$$
(iii) Assume that 
$$\int_\Omega g(\varphi(x))\,dx\neq 0;$$
 we show below that $\varphi$ is a stationary solution of Problem $(P)$. 
Let $\{t_n\}$ be a sequence such that $t_n\to \infty$ and
\begin{equation}\label{chap3:3chapchapsuhoitu:point:omega:limit:set}
 u(t_n) \to \varphi\mbox{~~in~~}L^1(\Omega) \mbox{~~as~~} n\to\infty.
\end{equation}
It follows from Lemma \ref{chap3:3chapchapLyapunov-functional} that
$$\int_0^{+\infty} \int_{\Omega} |u_t|^2 \,dxdt \le \frac{1}{C} (E_i(u_0)-\lim_{t \to\infty}E_i(u(t))<+\infty.$$ 
Thus 
$$\lim_{n \to \infty} \int_{t_n}^{t_n+1} \int_{\Omega} |u_t|^2 \,dxdt=0.$$ 
It follows that for $t \in [0, 1]$,
\begin{align*}
\|u(t_n+t) -\varphi\|_{L^1(\Omega)}&\le  \|u(t_n+t) -u(t_n)\|_{L^1(\Omega)}+\|u(t_n) -\varphi\|_{L^1(\Omega)}\\
&\le \int_{t_n}^{t_n+t} \|u_t\|_{L^1(\Omega)} +\|u(t_n) -\varphi\|_{L^1(\Omega)}\\
&\le t^{\frac{1}{2}} |\Omega|^{\frac{1}{2}} \left(\int_{t_n}^{t_n+t}\int_\Omega |u_t|^2 \,dxdt\right )^{\frac{1}{2}} +\|u(t_n) -\varphi\|_{L^1(\Omega)} \to 0
\end{align*}
as $n \to \infty$. Hence for all $t\in [0, 1]$,
 $$u(t_n+t) \to \varphi\mbox{~~in~~}L^1(\Omega) \mbox{~~as~~} n\to\infty.$$
Set $f(s)=g(s)p(s)$; then
the uniform boundedness of $u$ (cf. Theorem \ref{thm:existence:boundedness})
implies that
for all $t\in [0, 1]$,
$$ f(u(t_n+t)) \to f(\varphi), \quad g(u(t_n+t)) \to g(\varphi) \mbox{~~in~~}L^1(\Omega).$$
as $n\to\infty$. Note that 
$\int_\Omega g(\varphi) \neq 0$ implies
$$\lambda(t_n+t):=\dfrac{\int_\Omega f(u(t_n+t))}{\int_\Omega g(u(t_n+t))} \to \dfrac{\int_\Omega f(\varphi)}{\int_\Omega g(\varphi)}\mbox{~~as~~} n\to\infty.$$
It follows that for all $t \in [0, 1]$
$$\int_\Omega F(u(t_n+t))\,dx \to \int_\Omega F(\varphi) \,dx\mbox{~~as~~}n \to\infty,$$
where $F$ is defined by \eqref{eq:defi:F}. On the other hand, the uniform boundedness of $u(x,t)$ and Corollary \ref{cor:boundedness:lamda(t)} imply that $F(u(x,t))$ is uniformly bounded. Thus 
$$\int_\Omega F^2(u(t_n+t)) \,dx \to \int_\Omega F^2(\varphi) \,dx\mbox{~~as~~}n \to\infty,$$
for all $t \in [0, 1]$.
By Lebesgue's dominated convergence theorem, we have
$$\int_{0}^1\int_\Omega F^2(u(t_n+t)) \,dxdt\to \int_{0}^1\int_\Omega F^2(\varphi) \,dxdt\mbox{~~as~~}n \to\infty.$$
Since
$$\int_{0}^1\int_\Omega F^2(u(t_n+t)) \,dxdt=\int_{t_n}^{t_n+1} \int_{\Omega} F^2(u(t)) \,dxdt=\int_{t_n}^{t_n+1} \int_{\Omega} |u_t|^2 \,dxdt\to 0,$$
as $n \to \infty$, it follows that
$$\int_{0}^1\int_\Omega F^2(\varphi)\,dxdt=0.$$
This yields 
$$F(\varphi)=0 \mbox{~~a.e. in~~}\Omega,$$
or equivalently,
$$g(\varphi)\left [p(\varphi) - \dfrac{\displaystyle \int_{\Omega} g(\varphi)p(\varphi)}{\displaystyle \int_{\Omega} g(\varphi)}\right ] =0 \mbox{~~a.e. in~~}\Omega.$$
Therefore
$\varphi$ is a stationary solution of Problem $(P)$.
\end{proof}

\tn{In the two following theorems, we obtain a more precise description of the  elements in the $\omega$-limit set.}
\begin{theorem}\label{chap3:3chapchapomega:limit} 
Assume that one of hypotheses ${\bf (H_i)}, (i=1,2,3)$ holds and let $\varphi \in \omega(u_0)$. Then:
\begin{enumerate}[label=\emph{(\roman*)}]
\item If $\bf (H_1)$ holds, then $ 1 \le \varphi \le \esssup_\Omega  u_0$ and $\varphi$ is a step function. More precisely,
$$\varphi= \mu \chi_{A_1}+  \chi_{\Omega\setminus {A_1}},$$
where $\mu >1$, $A_1 \subset \Omega, |A_1|\neq 0.$

\item If $\bf (H_2)$ holds, then $ 0 \le \varphi \le 1$ and $\varphi$ is a step function. More precisely,
$$\varphi= \chi_{A_1}+  \nu  \chi_{{A_2}},$$
where $0<\nu <1$, $A_1, A_2 \subset \Omega,$ with $A_1 \cup A_2 \subset \Omega$ and \tn{$A_1 \cap A_2=\emptyset$.}

\item If $\bf (H_3)$ holds, then $ \essinf_\Omega u_0 \le \varphi \le 0$ and $\varphi$ is a step function. More precisely,
$$\varphi= \xi\chi_{A_1},$$
where $\xi <0$, $A_1 \subset \Omega, |A_1| \neq 0.$
\end{enumerate}
\end{theorem}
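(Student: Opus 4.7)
The strategy is to combine three ingredients: the a priori bounds of Theorem \ref{thm:existence:boundedness}, the dichotomy in Proposition \ref{cautruc:omega:limit}\,(iii), and the mass conservation identity \eqref{eq:mass-conservation}.

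First, I extract the pointwise bounds on $\varphi$. Fix $\varphi\in\omega(u_0)$ and pick $t_n\to\infty$ with $u(t_n)\to\varphi$ in $L^1(\Omega)$. Passing to a subsequence we may assume $u(t_n)\to\varphi$ a.e.\ in $\Omega$, and the bounds \eqref{ine:h1}, \eqref{ine:h2} (resp.\ the analog in (iii)) of Theorem \ref{thm:existence:boundedness} pass to the limit to yield the announced inequalities $1\le\varphi\le\esssup_\Omega u_0$, $0\le\varphi\le 1$, or $\essinf_\Omega u_0\le\varphi\le 0$.

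Next I derive the step-function structure. By Proposition \ref{cautruc:omega:limit}\,(iii), either (a) $\int_\Omega g(\varphi)=0$, or (b) $\varphi$ is a stationary solution of $(P)$, i.e.
\begin{equation*}
g(\varphi(x))\bigl[p(\varphi(x))-\bar\lambda\bigr]=0 \quad \text{a.e.\ in }\Omega,\qquad \bar\lambda:=\frac{\int_\Omega g(\varphi)p(\varphi)}{\int_\Omega g(\varphi)}.
\end{equation*}
Since $p$ is strictly increasing on $\R$, the equation $p(s)=\bar\lambda$ has a unique root $c:=p^{-1}(\bar\lambda)$. Combined with the zero set $\{0,1\}$ of $g$, this forces $\varphi(x)\in\{0,1,c\}$ a.e., so $\varphi$ is automatically a step function taking at most three values.

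Finally I case-split using the sign information on $g(\varphi)$ and the mass conservation identity $\int_\Omega\varphi=\int_\Omega u_0$ (obtained by taking $L^1$-limits in \eqref{eq:mass-conservation}).

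\textbf{Case $(H_1)$.} Here $\varphi\ge 1$, so $\varphi\ne 0$ and $g(\varphi)\le 0$. In alternative (a), $\int_\Omega g(\varphi)=0$ forces $g(\varphi)=0$ a.e., hence $\varphi\equiv 1$; but then mass conservation gives $|\Omega|=\int_\Omega u_0$, contradicting $u_0\ge 1$, $u_0\not\equiv 1$. So (b) holds and $\varphi$ takes values in $\{1,c\}$ with $c\ge 1$; the same contradiction excludes $c=1$ and also excludes $|\{\varphi=c\}|=0$. Setting $\mu:=c>1$ and $A_1:=\{\varphi=\mu\}$ completes (i).

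\textbf{Case $(H_3)$.} Symmetric to $(H_1)$: now $\varphi\le 0$ forces $\varphi\ne 1$ and $g(\varphi)\le 0$, so alternative (a) gives $\varphi\equiv 0$, contradicting $\int_\Omega\varphi=\int_\Omega u_0<0$. Hence $\varphi\in\{0,c\}$ with $c\le 0$; $c=0$ is excluded by the same mass argument, yielding (iii) with $\xi:=c<0$ and $A_1:=\{\varphi=\xi\}$.

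\textbf{Case $(H_2)$.} Now $0\le\varphi\le 1$ and $g(\varphi)\ge 0$. Alternative (a) forces $\varphi\in\{0,1\}$ a.e., which already has the desired form with $A_2=\emptyset$ (any $\nu\in(0,1)$ works). In alternative (b) the three candidate values are $\{0,1,c\}$ with $c\in[0,1]$; if $c\in\{0,1\}$ we again reduce to the two-valued case, otherwise set $\nu:=c\in(0,1)$, $A_1:=\{\varphi=1\}$, $A_2:=\{\varphi=\nu\}$, which are disjoint subsets of $\Omega$, giving (ii).

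The only genuinely delicate step is ruling out the degenerate possibilities $\varphi\equiv 1$ (in $(H_1)$) and $\varphi\equiv 0$ (in $(H_3)$), and more generally ensuring the non-triviality constraints $\mu>1$, $\xi<0$, $|A_1|\ne 0$. Each of these is handled uniformly by invoking mass conservation, which transfers the strict inequality embedded in the hypothesis on $u_0$ (the assumption $u_0\not\equiv 1$, resp.\ $u_0\not\equiv 0$) to the limit point $\varphi$.
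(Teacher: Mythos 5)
Your proposal is correct and follows essentially the same route as the paper: pass the a priori bounds of Theorem \ref{thm:existence:boundedness} to the limit, invoke Proposition \ref{cautruc:omega:limit}\,(iii), use the strict monotonicity of $p$ together with the zero set $\{0,1\}$ of $g$ to get the (at most three-valued) step-function structure, and use mass conservation to exclude the degenerate cases $\varphi\equiv 1$, $\varphi\equiv 0$. The only cosmetic difference is that the paper disposes of the alternative $\int_\Omega g(\varphi)=0$ up front (by noting $\int_\Omega\varphi=\int_\Omega u_0>|\Omega|$ forces $\int_\Omega g(\varphi)<0$ under $\bf (H_1)$) and writes out only case (i), whereas you treat the dichotomy explicitly in all three cases.
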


\begin{proof}
We only prove (i); the proofs of (ii) and (iii) are similar.
Since for all $t \ge 0$, 
$$1 \le u(t) \le \esssup_\Omega u_0 \mbox{~~a.e. in~~}\Omega,$$
it follows that
$$ 1 \le \varphi \le \esssup_\Omega u_0 \mbox{~~a.e. in~~}\Omega.$$
Note that since $\int_\Omega \varphi = \int_\Omega u_0 >|\Omega|,$
$ \varphi \not\equiv 1$.
Therefore $\int_\Omega g(\varphi) < 0$. It follows from Proposition \ref{cautruc:omega:limit}\,(iii) that $\varphi$ is a stationary solution of $(P)$, namely
$$g(\varphi)\left [p(\varphi) - \dfrac{\displaystyle \int_{\Omega} p(\varphi) g(\varphi)}{\displaystyle \int_{\Omega} g(\varphi)}\right ] =0 \quad \mbox{~~a.e. in~~}\Omega,$$
which together with the monotonicity of $p$ yields 
$$\varphi= \mu \chi_{A_1}+  \chi_{\Omega\setminus {A_1}},$$
for some constant $\mu >1$ and $A_1 \subset \Omega$. Moreover $|A_1|\neq 0$ since $ \varphi \not\equiv 1$.
\end{proof}

\begin{theorem}\label{propo:omega:2:rangbuoc}
Assume that one of hypotheses ${\bf (H_i)}, (i=1,2,3)$ holds. \tn{Let $\varphi \in \omega(u_0)$ and let $E_{i\infty}$ be given in Lemma \ref{chap3:3chapchapLyapunov-functional}\,(ii).}
We set $m_0:=\int_\Omega u_0$.
Then
\begin{enumerate}[label=\emph{(\roman*)}]
\item If $\bf (H_1)$ holds, then 
$$\mu |A_1|+  |\Omega|-|A_1|=m_0, \quad \mathcal P(\mu) |A_1|+  \mathcal P(1)(|\Omega|-|A_1|)=\tn{E_{1\infty}}.$$
\item If $\bf (H_2)$ holds, then
$$|A_1|+  \nu |A_2| =m_0, \quad \mathcal P(1) |A_1|+  \mathcal P(\nu)|A_2|=\tn{-E_{2\infty}}.$$
\item If $\bf (H_3)$ holds, then
$$\xi |A_1|=m_0, \quad \mathcal P(\xi) |A_1|=\tn{E_{3\infty}}.$$
\end{enumerate}
\end{theorem}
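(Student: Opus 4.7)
The plan is to derive both equations in each case from two independent facts already at our disposal: the mass conservation property \eqref{eq:mass-conservation}, and the fact that the Lyapunov functional $E_i$ takes the constant value $E_{i\infty}$ on $\omega(u_0)$ (Proposition~\ref{cautruc:omega:limit}\,(ii)). The explicit step-function form of $\varphi$ from Theorem~\ref{chap3:3chapchapomega:limit} will then convert these two scalar identities into the claimed algebraic relations.

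Fix $\varphi \in \omega(u_0)$ and a sequence $t_n \to \infty$ with $u(t_n) \to \varphi$ in $L^1(\Omega)$. First, by \eqref{eq:mass-conservation} we have $\int_\Omega u(x,t_n)\,dx = m_0$ for every $n$, and $L^1$-convergence allows us to pass to the limit, yielding
\[
\int_\Omega \varphi(x)\,dx = m_0.
\]
Second, since by Theorem~\ref{thm:existence:boundedness} the orbit $\{u(t)\}_{t\ge 0}$ is uniformly bounded in $L^\infty(\Omega)$, the function $\mathcal{P}$ is uniformly continuous on the relevant range, so $\mathcal{P}(u(t_n)) \to \mathcal{P}(\varphi)$ in $L^1(\Omega)$ (by dominated convergence after extracting a pointwise a.e. convergent subsequence). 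Hence $E_i(u(t_n)) \to E_i(\varphi)$, and Proposition~\ref{cautruc:omega:limit}\,(ii) gives
\[
(-1)^{i+1}\int_\Omega \mathcal{P}(\varphi)\,dx = E_i(\varphi) = E_{i\infty}.
\]

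Now it only remains to insert the step-function representation of $\varphi$ from Theorem~\ref{chap3:3chapchapomega:limit}. In case $\bf (H_1)$, writing $\varphi = \mu \chi_{A_1} + \chi_{\Omega \setminus A_1}$ directly gives $\int_\Omega \varphi = \mu|A_1| + (|\Omega|-|A_1|)$ and $\int_\Omega \mathcal{P}(\varphi) = \mathcal{P}(\mu)|A_1| + \mathcal{P}(1)(|\Omega|-|A_1|)$, which yields (i) with $E_1(\varphi) = E_{1\infty}$. In case $\bf (H_2)$, the form $\varphi = \chi_{A_1} + \nu \chi_{A_2}$ and the identity $\mathcal{P}(0) = 0$ give $\int_\Omega \varphi = |A_1| + \nu|A_2|$ and $\int_\Omega \mathcal{P}(\varphi) = \mathcal{P}(1)|A_1| + \mathcal{P}(\nu)|A_2|$, so the sign factor $(-1)^{i+1} = -1$ produces the claimed $-E_{2\infty}$ in the second identity of (ii). Finally in case $\bf (H_3)$, with $\varphi = \xi \chi_{A_1}$ and $\mathcal{P}(0)=0$, we get $\int_\Omega \varphi = \xi|A_1|$ and $\int_\Omega \mathcal{P}(\varphi) = \mathcal{P}(\xi)|A_1|$, establishing (iii).

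There is no serious obstacle here: the whole argument is bookkeeping once Theorem~\ref{chap3:3chapchapomega:limit} and Proposition~\ref{cautruc:omega:limit}\,(ii) are available. The only point deserving care is the justification of $E_i(u(t_n)) \to E_i(\varphi)$; this is immediate because the uniform $L^\infty$-bound from Theorem~\ref{thm:existence:boundedness} confines $u(t_n)$ and $\varphi$ to a fixed bounded interval on which $\mathcal{P}$ is Lipschitz, so the convergence $u(t_n) \to \varphi$ in $L^1(\Omega)$ transfers to $\mathcal{P}(u(t_n)) \to \mathcal{P}(\varphi)$ in $L^1(\Omega)$.
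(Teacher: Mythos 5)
Your proof is correct and follows essentially the same route as the paper: it combines the mass conservation property, the identity $E_i(\varphi)=E_{i\infty}$ from Proposition \ref{cautruc:omega:limit}\,(ii), and the step-function form of $\varphi$ from Theorem \ref{chap3:3chapchapomega:limit}. The only difference is that you re-justify the convergence $E_i(u(t_n))\to E_i(\varphi)$, which the paper had already established in the proof of Proposition \ref{cautruc:omega:limit}\,(ii), so this is merely a harmless redundancy.
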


\begin{proof}
	\tn{We only prove (i). The other cases can be proven in a similar way.
We apply \eqref{eq:rangbuoc:omega} for $i=1$ to obtain
\begin{equation}\label{eq:25:89}
\int_\Omega \mathcal P(\varphi)=E_{1\infty}.
\end{equation}
Hence (i) follows from \eqref{eq:25:89}, the mass conservation property and
Theorem \ref{chap3:3chapchapomega:limit}.}

\end{proof}


\section{Large time behavior of the solution of $(P)$}\label{chap3:sec6}
In this section, we only suppose Hypotheses $\bf (H_1), (H_3)$.

\begin{theorem}\label{uniqueness}
\begin{enumerate}[label=\emph{(\roman*)}]
\item Let $\bf (H_1)$ hold. Then $\omega(u_0)$
  only contains one element, \tn{denoted by $\varphi$}. Moreover
$\varphi$ is a step function of the form
$$\varphi= \mu \chi_{A_1}+  \chi_{\Omega\setminus {A_1}} \mbox{~~with~~}A_1 \subset \Omega, \mbox{~~and~~} \mu>1.$$
\item Let $\bf (H_3)$ hold. Then $\omega(u_0)$  only contains one element, \tn{denoted by $\psi$}. Moreover,
\tn{$\psi$} is a step function of the form
$$\tn{\psi}= \xi\chi_{A_1} \mbox{~~with~~}A_1 \subset \Omega, \mbox{~~and~~} \xi<0.$$
\end{enumerate}
\end{theorem}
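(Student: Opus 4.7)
The plan is to combine the two quantitative identities of Theorem \ref{propo:omega:2:rangbuoc}, which pin down the level values and the Lebesgue measure of the super-level set appearing in any $\varphi \in \omega(u_0)$, with the pointwise representation $u(x,t) = Y(t; u_0(x))$ from \eqref{problem:ODE:22:6:3:14}, which forces the geometric shape of that super-level set to be determined by $u_0$.

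\textbf{Step 1 (scalar parameters).} Under $\bf (H_1)$, write $m_0 := \int_\Omega u_0$ and eliminate $|A_1|$ from the two identities in Theorem \ref{propo:omega:2:rangbuoc}(i) to obtain
\[
\frac{\mathcal{P}(\mu) - \mathcal{P}(1)}{\mu - 1} = \frac{E_{1\infty} - \mathcal{P}(1)|\Omega|}{m_0 - |\Omega|}.
\]
The left-hand side is the mean of $p$ on $[1,\mu]$; its $\mu$-derivative has numerator $\int_1^\mu (p(\mu)-p(\tau))\,d\tau > 0$ by the strict monotonicity of $p$. Hence $\mu$, and then $|A_1|$, is uniquely determined by $u_0$ (note that $E_{1\infty}$ depends only on $u_0$ by Remark \ref{rem:unique;e:infty}). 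Under $\bf (H_3)$ the analogous equation is $\mathcal{P}(\xi)/\xi = E_{3\infty}/m_0$, and strict monotonicity of $\xi \mapsto \mathcal{P}(\xi)/\xi$ on $(-\infty,0)$ follows from the same integration trick applied to $\int_\xi^0 (p(\tau)-p(\xi))\,d\tau$.

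\textbf{Step 2 ($A_1$ is a super-level set of $u_0$).} Fix a sequence $t_n \to \infty$ with $u(t_n) \to \varphi$ in $L^1(\Omega)$ and extract a subsequence (still labelled $\{t_n\}$) along which $u(\cdot, t_n) \to \varphi$ almost everywhere. By \eqref{problem:ODE:22:6:3:14}, $u(x, t_n) = Y(t_n; u_0(x))$, and Lemma \ref{lem:monotonicity:of:Y} yields $Y(t_n; s_1) < Y(t_n; s_2)$ whenever $s_1 < s_2$. Consequently, for almost every pair $(x_1, x_2)$ with $u_0(x_1) < u_0(x_2)$ we have $\varphi(x_1) \le \varphi(x_2)$; combined with $\varphi \in \{1,\mu\}$, this produces $\alpha \in \R$ with $\{u_0 > \alpha\} \subseteq A_1 \subseteq \{u_0 \ge \alpha\}$ up to a null set. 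Moreover, since $Y(t_n; u_0(x))$ depends on $x$ only through $u_0(x)$, the limit $\varphi$ is almost everywhere constant on each level set $\{u_0 = c\}$; this rules out any proper partition of the plateau $\{u_0 = \alpha\}$, so $A_1$ coincides, up to a null set, with either $\{u_0 > \alpha\}$ or $\{u_0 \ge \alpha\}$.

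\textbf{Step 3 (conclusion and main obstacle).} An elementary comparison of thresholds shows that two sets of the forms described in Step 2 sharing the same Lebesgue measure must coincide up to a null set; together with Step 1 this pins down $A_1$, and hence $\varphi$, uniquely. Since the reasoning is independent of the chosen sequence $\{t_n\}$, $\omega(u_0)$ consists of a single element, establishing (i). Case (ii) follows by the parallel argument with super-level sets of $u_0$ replaced by sub-level sets (the flipped direction arises because $\varphi \in \{0,\xi\}$ with $\xi<0$). The main obstacle is Step 2: passing from $L^1$ convergence to a geometric, pointwise constraint on the shape of $A_1$ relies crucially on the pointwise formula for $u$ and on the strict monotonicity of $Y(t;\cdot)$, while the plateau analysis is what is needed to handle atoms in the distribution function of $u_0$.
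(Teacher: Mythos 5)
Your proof is correct, and Step 1 (the scalar part: eliminating $|A_1|$, the strict monotonicity of $s\mapsto(\mathcal P(s)-\mathcal P(1))/(s-1)$, and the appeal to Remark \ref{rem:unique;e:infty} to see that the right-hand side depends only on $u_0$) coincides exactly with the paper's computation. Where you genuinely diverge is in how the \emph{set} $A_1$ is pinned down. The paper never analyzes $A_1$ inside $\Omega$ at all: it passes to the rearranged problem $(P^\sharp)$, where any limit point $\varphi^\sharp$ is a non-increasing step function on $(0,|\Omega|)$ and is therefore completely determined by the two scalars $\mu$ and $a_1=|A_1|$; this makes $\omega(u_0^\sharp)$ a singleton, and the isometry \eqref{eq:24} then transfers the Cauchy property of $u^\sharp(t)$ back to $u(t)$, giving convergence of $u(t)$ and hence a singleton $\omega(u_0)$. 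You instead work directly in $\Omega$, using the pointwise representation $u(x,t)=Y(t;u_0(x))$ together with the monotonicity of $Y(t;\cdot)$ to show that $A_1$ must be a super-level set of $u_0$ (your plateau argument correctly handles the atoms of the distribution function of $u_0$), and then observe that super-level sets of prescribed measure are unique up to null sets. Both arguments ultimately rest on the same monotone structure of the flow map $s\mapsto Y(t;s)$ --- the rearrangement machinery of Section 3 is itself built on it --- but your route bypasses $(P^\sharp)$ entirely and yields the additional explicit information that $A_1=\{u_0>\alpha\}$ or $\{u_0\ge\alpha\}$ for some threshold $\alpha$, which the paper's proof leaves implicit; the paper's route is shorter given the machinery already developed and delivers the convergence $u(t)\to\varphi$ directly as a Cauchy-sequence statement, whereas you obtain it from relative compactness combined with uniqueness of the limit point.
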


\begin{proof}
We only prove (i). First we prove that $\omega(u_0^\sharp)$ only contains one element.
\tn{Note that (cf. Corollary \ref{quenheeeee:lemmme:u-shapr:u}) any element of $\omega(u_0^\sharp)$ has the form 
$\varphi^\sharp$ with $\varphi \in \omega(u_0)$.} 
 Since $\varphi^\sharp$ is non-increasing,  Theorem
\ref{chap3:3chapchapomega:limit} implies that there exist $0<a_1 \le |\Omega|, \mu>1$ such that
$$\varphi^\sharp=\mu \chi_{(0, a_1)}+  \chi_{(a_1, |\Omega|)}.$$
\tn{It follows from \eqref{eq:rangbuoc:omega}  and a standard property in the rearrangement theory (cf. \cite[Proposition 3.1\,(iii)]{HIlhorst-Matano-Nguyen-Weber}) that}
$$\tn{\int_{\Omega^\sharp} \mathcal P(\varphi^\sharp)=\int_{\Omega} \mathcal P(\varphi)=E_{1\infty}.}$$
\tn{Hence using the mass conservation property and recalling that $m_0:=\int_{\Omega} u_0$, we obtain} 
\begin{equation*}
\begin{cases}
\mu a_1+|\Omega|-a_1=m_0\\
\mathcal P(\mu) a_1+\mathcal P(1) (|\Omega|-a_1)=E_{1\infty},
\end{cases}
\end{equation*}
or equivalently,
\begin{equation}\label{sys:case:phuongtrinh}
\begin{cases}
(\mu-1) a_1=m_0-|\Omega|\\
(\mathcal P(\mu)- \mathcal P(1))a_1=E_{1\infty} -\mathcal P(1)|\Omega|.
\end{cases}
\end{equation}
Since we know the existence of a function $\varphi^\sharp$, we also know that the system \eqref{sys:case:phuongtrinh} possesses a solution. We show below that it is unique. Indeed, we deduce from \eqref{sys:case:phuongtrinh} that
\begin{equation}\label{sdfsfmpgojklkjly}
\dfrac{\mathcal P(\mu)-\mathcal P(1)}{\mu-1}=\dfrac{E_{1\infty} -\mathcal P(1)|\Omega|}{m_0-|\Omega|}.
\end{equation}
Set
$$\mathcal G(s)=\dfrac{\mathcal P(s)- \mathcal P(1)}{s-1}.$$
Then  \eqref{sdfsfmpgojklkjly} becomes
\begin{equation}\label{sdfsfmpgojklkjly222}
\mathcal G(\mu)=\dfrac{E_{1\infty} -\mathcal P(1)|\Omega|}{m_0-|\Omega|}.
\end{equation}
We use the monotonicity of $p$ to deduce that
\begin{align*}
\mathcal G'(s)&=\dfrac{p(s)(s-1)-(\mathcal P(s)- \mathcal P(1))}{(s-1)^2}\\
&=\dfrac{\displaystyle\int_1^s [p(s)-p(\tau)] d\tau}{(s-1)^2} >0 \mbox{~~for~~} s>1.
\end{align*}
Hence $\mathcal G$ is strictly increasing on $(1, \infty)$. 
\tn{It follows that the equation \eqref{sdfsfmpgojklkjly222}
admits at most one solution $\mu>1$. Furthermore, in view of Remark \ref{rem:unique;e:infty}, the right-hand-side of \eqref{sdfsfmpgojklkjly222} is uniquely defined by the initial function $u_0$. 
Thus $\mu$ is uniquely defined by the initial function.}
Therefore also $a_1$ is uniquely determined.
The knowledge of the constants $\mu$ and $a_1$ completely determines the stationary solution $\varphi^\sharp$,
so that $\omega(u^\sharp_0)$ only contains one element.

Next we show that $\omega(u_0)$ only contains one element. Since $\omega(u^\sharp_0)$ only contains one element, $u^\sharp(t)$ converges to $\varphi^\sharp$
as $t \to \infty$. Consequently,  $u^\sharp(t)$
 is a Cauchy sequence in $L^1(\Omega^\sharp)$. 
By Lemma \ref{chuyenthanhbode:quenheeeee:lemmme:u-shapr:u}, 
$u(t)$ is also a Cauchy sequence in $L^1(\Omega)$. This implies that 
$u(t)$ converges as $t \to \infty$ and hence
$\omega(u_0)$ only contains one element. 
\end{proof}

The following result is an immediate consequence of Theorem \ref{uniqueness} and the uniform boundedness of $u$.

\begin{corollary}\label{chap3:3chapchapduynhat:ws:theorem:27:08}
Let $\bf (H_i)$ hold for $i=1$ or $3$. Then for  all $p \in [1,\infty)$,
$$u(t) \to \varphi \quad \mbox{~~in~~} L^p(\Omega) \mbox{~~as~~} t \to \infty,$$
where $\varphi$ is given in Theorem \ref{uniqueness}.
\end{corollary}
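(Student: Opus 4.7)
The plan is to combine three ingredients already established: (a) the relative compactness of the orbit $\{u(t):t\ge 0\}$ in $L^1(\Omega)$ from Proposition \ref{boundedness:solution:P:15}; (b) the fact that $\omega(u_0)=\{\varphi\}$ is a singleton from Theorem \ref{uniqueness}; and (c) the uniform $L^\infty$ bound from Theorem \ref{thm:existence:boundedness}.

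First I would upgrade the singleton property of $\omega(u_0)$ to full convergence in $L^1(\Omega)$. Suppose by contradiction that $u(t)\not\to\varphi$ in $L^1(\Omega)$. Then there exist $\delta>0$ and a sequence $t_n\to\infty$ such that $\|u(t_n)-\varphi\|_{L^1(\Omega)}\ge\delta$. By Proposition \ref{boundedness:solution:P:15}, the sequence $\{u(t_n)\}$ has a subsequence, still denoted $\{u(t_n)\}$, which converges in $L^1(\Omega)$ to some $\psi\in L^1(\Omega)$. By definition of $\omega(u_0)$, we have $\psi\in\omega(u_0)=\{\varphi\}$, so $\psi=\varphi$, contradicting $\|u(t_n)-\varphi\|_{L^1(\Omega)}\ge\delta$. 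Hence $u(t)\to\varphi$ in $L^1(\Omega)$ as $t\to\infty$.

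Next I would promote this $L^1$ convergence to $L^p$ convergence for every $p\in[1,\infty)$ by a standard interpolation using the uniform $L^\infty$ bound. Under $\mathbf{(H_1)}$ (respectively $\mathbf{(H_3)}$), Theorem \ref{thm:existence:boundedness} yields a constant $M>0$ such that $\|u(t)\|_{L^\infty(\Omega)}\le M$ for all $t\ge 0$, and passing to the $L^1$-limit also gives $\|\varphi\|_{L^\infty(\Omega)}\le M$. Therefore, for any $p\in[1,\infty)$,
\begin{equation*}
\|u(t)-\varphi\|_{L^p(\Omega)}^p=\int_\Omega |u(t)-\varphi|^{p-1}\,|u(t)-\varphi|\,dx\le (2M)^{p-1}\|u(t)-\varphi\|_{L^1(\Omega)},
\end{equation*}
which tends to zero as $t\to\infty$ by the first step. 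This yields the desired convergence.

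There is no significant obstacle here: the whole statement is essentially a packaging of results already proved. The only subtle point worth writing out carefully is the contradiction argument in the first step, where one must invoke both relative compactness (to extract a convergent subsequence) and the uniqueness of the limit (to identify the limit as $\varphi$); the $L^p$ upgrade is then a one-line interpolation.
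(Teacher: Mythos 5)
Your proof is correct and matches the paper's intent: the corollary is stated there as an immediate consequence of Theorem \ref{uniqueness} and the uniform boundedness, and the interpolation $\|u(t)-\varphi\|_{L^p(\Omega)}^p\le (2M)^{p-1}\|u(t)-\varphi\|_{L^1(\Omega)}$ is exactly the intended step. Your first step (compactness plus singleton $\omega$-limit set implies convergence) is a valid reconstruction, though strictly speaking it is already contained in the paper's proof of Theorem \ref{uniqueness}, which shows directly that $u(t)$ is Cauchy in $L^1(\Omega)$ and hence converges to $\varphi$.
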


\subsection*{Acknowledgements}
The author would like to thank Prof. Danielle Hilhorst for many helpful discussions.

\end{document}